\newcommand{\cblank}{\ \cdot\ }
\newcommand{\ucontents}[2]{\addcontentsline{toc}{#1}{\numberline{}{#2}}}
\newcommand{\ovln}[1]{\overline{#1}}
\newcommand{\such}{:}
\newcommand{\epsln}{\varepsilon}
\newcommand{\integers}{\mathbb{Z}}
\newcommand{\goesto}{\mapsto}
\newcommand{\reals}{\mathbb{R}}
\newcommand{\demph}[1]{\textbf{\textup{#1}}}
\newcommand{\done}{\hfill\ensuremath{\Box}}
\newenvironment{prooflike}[1]{\begin{trivlist}\item\textbf{#1}\ }
{\end{trivlist}}
\newenvironment{proof}{\begin{prooflike}{Proof}}{\end{prooflike}}
\newcommand{\sub}{\subseteq}
\newcommand{\dee}{\,d}
\newcommand{\cln}{\colon}
\DeclareMathOperator{\Vol}{Vol}
\newcommand{\Vold}[1]{\Vol_{#1}}
\newcommand{\onedec}[1]{#1'}
\newcommand{\cvx}[1]{\onedec{\mathscr{K}_{#1}}}
\newcommand{\ecvx}[1]{\mathscr{K}_{#1}}
\newcommand{\gcvx}{\mathscr{K}}
\newcommand{\gcvxr}[1]{\gcvx[#1]}
\newcommand{\Val}[1]{\onedec{\mathrm{Val}_{#1}}}
\newcommand{\Gr}[2]{\onedec{\mathrm{Gr}_{#1, #2}}}
\newcommand{\Graff}[2]{\onedec{\mathrm{Graff}_{#1, #2}}}
\newcommand{\IVdbl}[2]{\onedec{V_{#1, #2}}}
\newcommand{\IV}[1]{\onedec{V_{#1}}}
\newcommand{\ball}[1]{B_{#1}}
\newcommand{\ucube}[1]{C_{#1}}
\DeclareMathOperator{\image}{image}
\newcommand{\otr}[2]{#1_{#2}}
\newcommand{\bdy}{\partial}
\newcommand{\hoaff}[1]{G_{#1}}
\newcommand{\ho}[1]{H_{#1}}
\newcommand{\card}[1]{|#1|}
\newcommand{\iso}{\cong}
\newcommand{\hlf}{{\textstyle\frac{1}{2}}}
\newcommand{\hi}{\mathbf{H}}
\newcommand{\his}[1]{\mathbf{H}^{#1}}
\newcommand{\ls}[2]{[#1, #2]}
\newcommand{\lss}[3]{[#1, #2, #3]}
\newcommand{\cubint}[2]{\llbracket #1, #2 \rrbracket}
\newcommand{\IVe}[1]{V_{#1}}
\newcommand{\Valg}[1]{\mathrm{Val}(#1)}
\newcommand{\nrm}[2]{\Vert #1 \Vert_{#2}}
\newtheorem{thm}{Theorem}[section]
\newtheorem{propn}[thm]{Proposition}
\newtheorem{lemma}[thm]{Lemma}
\newtheorem{cor}[thm]{Corollary}
\newtheorem{preremark}[thm]{Remark}
\newenvironment{remark}{\begin{preremark}\upshape}{\end{preremark}}
\newtheorem{predefn}[thm]{Definition}
\newenvironment{defn}{\begin{predefn}\upshape}{\end{predefn}}
\newtheorem{preexample}[thm]{Example}
\newenvironment{example}{\begin{preexample}\upshape}{\end{preexample}}
\newtheorem{preexamples}[thm]{Examples}
\newenvironment{examples}{\begin{preexamples}\upshape}{\end{preexamples}}
\author{Tom Leinster%
\thanks{School of Mathematics and Statistics, University of Glasgow, Glasgow
G12 8QW, UK; Tom.Leinster@glasgow.ac.uk.  
Supported by an EPSRC Advanced Research Fellowship}}
\title{\vspace*{-2em}Integral geometry for the 1-norm}
\date{}
\begin{document}

\sloppy
\maketitle

\begin{abstract}  
Classical integral geometry takes place in Euclidean space, but one can
attempt to imitate it in any other metric space.  In particular, one can
attempt this in $\reals^n$ equipped with the metric derived from the $p$-norm.
This has, in effect, been investigated intensively for $1 < p < \infty$, but
not for $p = 1$.  We show that integral geometry for the $1$-norm bears a
striking resemblance to integral geometry for the $2$-norm, but is radically
different from that for all other values of $p$.  We prove a Hadwiger-type
theorem for $\reals^n$ with the $1$-norm, and analogues of the classical
formulas of Steiner, Crofton and Kubota.  We also prove principal and higher
kinematic formulas.  Each of these results is closely analogous to its
Euclidean counterpart, yet the proofs are quite different.
\end{abstract}

\tableofcontents

\section*{Introduction}
\ucontents{section}{Introduction}

Classical integral geometry provides definite answers to natural questions
about convex subsets of Euclidean space.  The Cauchy formula, for instance,
tells us that the surface area of a convex body in $\reals^3$ is proportional
to the expected area of its projection onto a random plane.  The Crofton
formula states that it is also proportional to the measure of the set of
affine lines that meet the body.  The Steiner formula gives the volume
of the set of points within a specified distance of a given convex body.  The
kinematic formula tells us the probability that a randomly-placed convex body
$X$ meets another body $Y$, given that it meets a larger body $Z \supseteq Y$.

It is so successful a theory that one naturally seeks to imitate it elsewhere.
This has been done in several ways.  For example,
Alesker~\cite{AlesTVM,AlesVMI} (foreshadowed by Fu~\cite{FuKFI}) has developed
integral geometry for manifolds, while Bernig and Fu~\cite{BeFu} have
developed Hermitian integral geometry.  Others have extended integral geometry
to finite-dimensional real Banach spaces, and more generally to projective
Finsler spaces: see for instance Schneider and Wieacker~\cite{ScWi} and
Schneider~\cite{SchnIGP,SchnCMP}.  (This includes $\reals^n$ with the
$1$-norm~\cite{SchnCMP}, but it is a different generalization from that
presented here.)  An important role is played there by Holmes-Thompson
valuations, comparable to intrinsic volumes in Euclidean space: see
Schneider~\cite{SchnIVM}, \'Alvarez Paiva and Fernandes~\cite{AlFe}, and
Bernig~\cite{Bern}.

But one simple setting in which integral geometry seems not to have been fully
developed is that of ordinary metric spaces.  A natural notion of convexity is
available there: a subset $X$ of a metric space $A$ is \emph{geodesic} if for
any two points $x, x' \in X$, say distance $D$ apart, there exists an isometry
$\gamma\cln [0, D] \to X$ with $\gamma(0) = x$ and $\gamma(D) = x'$.  Using
this, we can extend to an arbitrary metric space $A$ the fundamental notion of
continuous invariant valuation on convex sets.  And just as for Euclidean
space, the continuous invariant valuations form a vector space, $\Valg{A}$,
indexing all the ways of measuring the size of geodesic subsets of $A$.

Every metric space therefore poses a challenge: classify its continuous
invariant valuations.  A celebrated theorem of Hadwiger answers the challenge
for Euclidean space $\reals^n$, stating that $\Valg{\reals^n}$ is $(n +
1)$-dimensional, with a basis $\IVe{0}, \ldots, \IVe{n}$ in which $\IVe{i}$ is
homogeneous of degree $i$.  The valuations $\IVe{i}$ are the \emph{intrinsic
volumes} (also known, with different normalizations, as the quermassintegrals
or Minkowski functionals).  When $n = 2$, for instance, they are Euler
characteristic, half of perimeter, and area.

More ambitiously, we can attempt to reproduce in an arbitrary metric
space---or one with as little structure as possible---the classical results of
integral geometry in the tradition of Crofton and Blaschke.  For example, we
can seek analogues of the formulas listed in the first paragraph.  To do this,
we will need our metric space to carry an affine structure; and among the most
important such spaces are the Banach spaces $\ell_p^n$, that is, $\reals^n$
equipped with the metric induced by the $p$-norm $\nrm{x}{p} = \bigl( \sum
|x_i|^p \bigr)^{1/p}$ ($p \in [1, \infty)$).

What is known about the integral geometry (in the sense just described) of the
metric spaces $\ell_p^n$?  Let $p \in [1, \infty)$.  The case $p = 2$ is the
classical Euclidean theory.  For $p \neq 1$, the space $\ell_p^n$ is strictly
convex, so the only geodesic subsets are the convex sets.  On the other hand,
for $p \neq 2$, the isometry group of $\ell_p^n$ is very small, generated by
permutations of coordinates, reflections in coordinate hyperplanes, and
translations.  So if $p \neq 1, 2$ then $\ell_p^n$ has the same geodesic
subsets as $\ell_2^n$, but far fewer isometries.  Hence $\Valg{\ell_p^n}$ is
much bigger than $\Valg{\ell_2^n} \cong \reals^{n + 1}$; indeed, it is
infinite-dimensional.  Much is known about the structure of $\Valg{\ell_p^n}$
for $p \neq 1, 2$; this is essentially the theory of translation-invariant
valuations on convex subsets of $\reals^n$ \cite{McMVET, McMCTI, AlesOPM,
AlesDTI}.

But the case $p = 1$ has until now been overlooked, and turns out to contain a
surprise.  As we shall see, the metric space $\ell_1^n$ behaves very much
like $\ell_2^n$, but very much unlike all the other spaces $\ell_p^n$.  For
example, there is a Hadwiger-type theorem stating that $\Valg{\ell_1^n} \cong
\reals^{n + 1}$.  Furthermore, $\Valg{\ell_1^n}$ has a basis $\IV{0}, \ldots,
\IV{n}$ of valuations, the \emph{$\ell_1$-intrinsic volumes}, where $\IV{i}$
is homogeneous of degree $i$.  Hence there is a \emph{canonical} isomorphism
$\Valg{\ell_1^n} \cong \Valg{\ell_2^n}$---despite the fact that $\ell_1^n$ and
$\ell_2^n$ have neither the same geodesic subsets nor the same isometry group.

The resemblance goes deeper still: as we demonstrate, all the standard
Euclidean integral-geometric formulas have close analogues in $\ell_1^n$.
Nevertheless, the proofs are quite different: just as the classical proofs
exploit special features of Euclidean geometry, ours exploit special features
of $\ell_1$ geometry.  

A mystery remains: why are the results for $\ell_1^n$ and $\ell_2^n$
so similar to each other, yet so different from those for $\ell_p^n$ when $p
\in (1, 2) \cup (2, \infty)$?  There is no obvious common generalization of
the cases $p = 1$ and $p = 2$.  Yet a common generalization must surely
exist.

The case $p = \infty$ appears not to have been investigated either.
Since $\ell_\infty^2$ is isometric to $\ell_1^2$, the vector space
$\Valg{\ell_\infty^2}$ is $3$-dimensional, like $\Valg{\ell_1^2}$ and
$\Valg{\ell_2^2}$ but unlike $\Valg{\ell_p^2}$ for $p \in (1, 2) \cup (2,
\infty)$.  It is natural to conjecture that $\Valg{\ell_\infty^n} \cong
\reals^{n + 1}$ for all $n \geq 0$.

This paper is organized as follows.  We begin by establishing the fundamental
facts about geodesic subsets of $\ell_1^n$, here called \emph{$\ell_1$-convex}
sets.  (They include the convex sets, but are much more general.)  Almost
immediately we encounter a stark difference between $\ell_1^n$ and $\ell_2^n$:
the intersection of $\ell_1$-convex sets need not be $\ell_1$-convex.  And
yet, there is a more subtle sense in which the two situations are precisely
analogous (Remark~\ref{rmk:cogeodesic}).  Guided by this analogy, we prove
$\ell_1$ versions of all the elementary laws governing intersections,
projections and Minkowski sums of ordinary convex sets (Sections~\ref{sec:con}
and~\ref{sec:Min}).  We also prove a result that has no clear analogue in
$\ell_2^n$: if the union of two $\ell_1$-convex sets is $\ell_1$-convex, then
so is its intersection.

Having described the algebra of the space of $\ell_1$-convex sets, we turn to
its topology (Sections~\ref{sec:app} and~\ref{sec:val}).  We show that it has
a dense subspace consisting, roughly, of the $\ell_1$-convex unions of cubes.
We then generalize the theorem of McMullen~\cite{McMVET} that a monotone
translation-invariant valuation on convex sets is continuous.  Our
generalization implies both McMullen's theorem and its $\ell_1$ counterpart.

These results provide the tools that enable us to develop the integral
geometry of the metric space $\ell_1^n$.  The $\ell_1$-intrinsic volumes are
defined by a Cauchy-type formula, adapted to the smaller isometry group of
$\ell_1^n$.  We prove analogues of the core theorems of Euclidean integral
geometry: first a Hadwiger-type theorem, then analogues of the Steiner,
Crofton, Kubota and kinematic formulas (Sections
\mbox{\ref{sec:Had}--\ref{sec:kin}}).  While Sections
\ref{sec:con}--\ref{sec:val} depend heavily on specific features of the
geometry of $\ell_1^n$, Sections \ref{sec:Had}--\ref{sec:kin} are formally
close to their Euclidean counterparts.  Even the constants appearing in the
formulas are analogous: one simply replaces the flag coefficients~\cite{KlRo}
in the Euclidean formulas by the corresponding binomial coefficients.  

As this suggests, the integral geometry of $\ell_1^n$ can be regarded as a
cousin of the integral geometry of Euclidean space.  It is more simple
analytically, because of the smaller isometry group of $\ell_1^n$.  But since
there are many more geodesic sets in $\ell_1^n$ than in $\ell_2^n$,
it is also more complex geometrically.

\paragraph*{Conventions} $\reals^n$ denotes real $n$-dimensional space as a
set, topological space or vector space, but with no implied choice of metric
except when $n = 1$.  We allow $n = 0$.  Lebesgue measure on $\reals^n$ is
written as $\Vold{n}$ or $\Vol$.  The metric on a metric space is usually
written as $d$.

\paragraph*{Acknowledgements} I thank Andreas Bernig, Joseph Fu, Daniel Hug,
Mark Meckes, Rolf Schneider and the anonymous referee for helpful
conversations and suggestions.

\section{$\ell_1$-convexity}
\label{sec:con}

Here we define $\ell_1$-convexity and give some useful equivalent conditions.
We also discuss the class of intervals in $\reals^n$, dual in a certain sense
to the class of $\ell_1$-convex sets.  Along the way, we review
some standard material on abstract metric spaces; this can be found in texts
such as Gromov~\cite[Chapter 1]{GroMSR} and Papadopoulos~\cite{Papa}.

\begin{defn}
A \demph{path} in a metric space $X$ is a continuous map $\gamma\cln [c, c']
\to X$, where $c$ and $c'$ are real numbers with $c \leq c'$; it \demph{joins}
$\gamma(c)$ and $\gamma(c')$.  It is \demph{distance-preserving} if
$d(\gamma(t), \gamma(t')) = |t - t'|$ for all $t, t' \in [c, c']$.
\end{defn}

\begin{defn}
A metric space $X$ is \demph{geodesic} if for all $x, x' \in X$, there exists
a distance-preserving path joining $x$ and $x'$.
\end{defn}

For example, a subspace of Euclidean space is geodesic if and only if it is
convex. 

\begin{defn}
A subset of $\reals^n$ is \demph{$\ell_1$-convex} if it is geodesic when
given the subspace metric from $\ell_1^n$.  
\end{defn}

A convex subset of $\reals^n$ is $\ell_1$-convex, but not conversely.  For
example, let $f\cln \reals \to \reals$ be an increasing continuous function:
then the graph $\{(x, f(x)) \such x \in \reals \} \sub \ell_1^2$ is
$\ell_1$-convex.  An $\ell_1$-convex set need not even have positive reach:
consider an L-shaped subset of $\ell_1^2$.

\begin{defn}
Let $\gamma\cln [c, c'] \to X$ be a path in a metric space $X$.  The
\demph{length} of $\gamma$ is the supremum of $\sum_{r = 1}^k d(\gamma(t_{r -
1}), \gamma(t_r))$ over all partitions $c = t_0 \leq t_1 \leq \cdots \leq t_k
= c'$.
\end{defn}

When speaking of functions $[c, c'] \to \reals$, we use the terms
\demph{increasing} and \demph{decreasing} in the non-strict sense.  (For
example, a constant function is both.)  A real-valued function is
\demph{monotone} if it is increasing or decreasing.

\begin{defn}
A path $\gamma\cln [c, c'] \to \reals^n$ is \demph{monotone} if each of its
components $\gamma_i\cln [c, c'] \to \reals$ ($1 \leq i \leq n$) is monotone.
\end{defn}

Let $X$ be a metric space and $x, x' \in X$.  A point $y \in X$ is
\demph{between} $x$ and $x'$ if $d(x, x') = d(x, y) + d(y, x')$, and
\demph{strictly between} if also $x \neq y \neq x'$.  When $X =
\ell_1^n$, a point $y$ is between $x$ and $x'$ if and only
if for all $i \in \{1, \ldots, n\}$, either $x_i \leq y_i \leq x'_i$ or $x'_i
\leq y_i \leq x_i$.

\begin{defn}
A metric space is \demph{Menger convex} if for all distinct points $x, x'$
there exists a point strictly between $x$ and $x'$.
\end{defn}

We will repeatedly use the following characterization theorem for
$\ell_1$-convex sets.

\begin{propn}   \label{propn:lc-eqv}
Let $X \sub \ell_1^n$.  The following are equivalent:
\begin{enumerate}
\item   \label{part:lc-eqv-lc}
$X$ is $\ell_1$-convex
\item   \label{part:lc-eqv-length} 
every pair $x, x'$ of points of $X$ can be joined by a path of length $d(x,
x')$
\item   \label{part:lc-eqv-mono}
every pair of points of $X$ can be joined by a monotone path in $X$.
\end{enumerate}
When $X$ is closed, a further equivalent condition is that $X$ is Menger
convex. 
\end{propn}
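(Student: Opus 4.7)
The plan is to prove the cycle (\ref{part:lc-eqv-lc}) $\Rightarrow$ (\ref{part:lc-eqv-length}) $\Rightarrow$ (\ref{part:lc-eqv-mono}) $\Rightarrow$ (\ref{part:lc-eqv-lc}), then handle the closed case separately by connecting Menger convexity to (\ref{part:lc-eqv-lc}).

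The implication (\ref{part:lc-eqv-lc}) $\Rightarrow$ (\ref{part:lc-eqv-length}) is essentially definitional: if $\gamma\cln [0, D] \to X$ is distance-preserving with $D = d(x, x')$, then for any partition the sum $\sum d(\gamma(t_{r-1}), \gamma(t_r)) = \sum (t_r - t_{r-1}) = D$, so the length equals $D$. For (\ref{part:lc-eqv-length}) $\Rightarrow$ (\ref{part:lc-eqv-mono}), the key observation is that the $\ell_1$-length of a path $\gamma\cln [c,c'] \to \reals^n$ equals $\sum_{i=1}^n V_i(\gamma)$, where $V_i$ is the total variation of the $i$-th coordinate $\gamma_i$. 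Since $V_i(\gamma) \geq |\gamma_i(c') - \gamma_i(c)|$ with equality if and only if $\gamma_i$ is monotone, and since $\sum_i |\gamma_i(c') - \gamma_i(c)| = d(x, x')$, a path of length $d(x, x')$ must be monotone in every coordinate. Thus the witnessing path in (\ref{part:lc-eqv-length}) is itself monotone.

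For (\ref{part:lc-eqv-mono}) $\Rightarrow$ (\ref{part:lc-eqv-lc}), given a monotone path $\gamma\cln [c, c'] \to X$ joining $x$ and $x'$, I would reparametrize by arc length. Define $s\cln [c, c'] \to [0, D]$ by $s(t) = \sum_{i=1}^n |\gamma_i(t) - \gamma_i(c)|$, where $D = d(x, x')$; by monotonicity of each $\gamma_i$ this is continuous, increasing, and surjective, and one checks $d(\gamma(t_1), \gamma(t_2)) = |s(t_1) - s(t_2)|$. Hence the assignment $\tilde\gamma(s(t)) = \gamma(t)$ is well-defined (whenever $s(t_1) = s(t_2)$, monotonicity forces $\gamma(t_1) = \gamma(t_2)$) and distance-preserving. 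Continuity of $\tilde\gamma$ follows from a standard subsequence argument using compactness of $[c, c']$: any sequence $r_n \to r$ lifts to $t_n$ with $s(t_n) = r_n$, every convergent subsequence $t_{n_k} \to t^*$ has $s(t^*) = r$ and hence $\gamma(t_{n_k}) \to \tilde\gamma(r)$.

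For the closed case, (\ref{part:lc-eqv-lc}) trivially implies Menger convexity by taking any interior point of a distance-preserving path. The converse is the standard Menger--Mazurkiewicz result: a complete Menger-convex metric space is geodesic. Since $X$ is closed in the complete space $\ell_1^n$ it is complete, so Menger convexity yields midpoints between any two points, these iterate to produce a dyadically-indexed family of intermediate points at prescribed distances, and completeness lets one extend continuously to all of $[0, D]$. I would either cite this from Papadopoulos~\cite{Papa} or sketch the dyadic-midpoint construction. The main technical obstacle is ensuring the reparametrization in (\ref{part:lc-eqv-mono}) $\Rightarrow$ (\ref{part:lc-eqv-lc}) produces a continuous function through a non-injective quotient; the fact that all three coordinates are simultaneously monotone is what makes $s$ the ``correct'' quotient and rescues the argument.
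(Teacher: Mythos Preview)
Your proof is correct. The paper takes a slightly different route: rather than the cycle (i)$\Rightarrow$(ii)$\Rightarrow$(iii)$\Rightarrow$(i), it proves the two equivalences (i)$\Leftrightarrow$(ii) and (ii)$\Leftrightarrow$(iii) separately. For (i)$\Leftrightarrow$(ii) it simply cites the general fact (Papadopoulos) that a metric space is geodesic iff every pair of points is joined by a path of length equal to their distance; for (ii)$\Leftrightarrow$(iii) its argument is essentially yours---a path in $\ell_1^n$ has length $d(\gamma(c),\gamma(c'))$ iff each coordinate is monotone. Your explicit arc-length reparametrization for (iii)$\Rightarrow$(i) thus replaces the Papadopoulos citation with a hands-on argument specific to $\ell_1^n$: more self-contained, at the cost of a few more lines. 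One small redundancy: once you have established that $\tilde\gamma$ is distance-preserving, continuity is immediate (an isometry is $1$-Lipschitz), so the subsequence argument is unnecessary. For the closed case the paper likewise cites Papadopoulos, invoking the Heine--Borel form (closed bounded subsets of $\ell_1^n$ are compact) rather than completeness; this matches your appeal to the Menger--Mazurkiewicz theorem.
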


\begin{proof}
A metric space is geodesic if and only if each pair of points $x, x'$ can be 
joined by a path of length $d(x, x')$~\cite[Proposition 2.2.7]{Papa}.  This
proves the equivalence of~(\ref{part:lc-eqv-lc})
and~(\ref{part:lc-eqv-length}).

For (\ref{part:lc-eqv-length})$\Leftrightarrow$(\ref{part:lc-eqv-mono}), a path
$\gamma\cln [c, c'] \to \ell_1^n$ has length $d(\gamma(c), \gamma(c'))$ if and
only if
\begin{equation}
\label{eq:straight-sum}
|\gamma_i(t) - \gamma_i(t')| 
+
|\gamma_i(t') - \gamma_i(t'')|
=
|\gamma_i(t) - \gamma_i(t'')|
\end{equation}
whenever $1 \leq i \leq n$ and $c \leq t \leq t' \leq t'' \leq c'$.
Equation~\eqref{eq:straight-sum} holds if and only if $\gamma_i(t')$ is
between $\gamma_i(t)$ and $\gamma_i(t'')$.  It follows that $\gamma$ has
length $d(\gamma(c), \gamma(c'))$ if and only if it is monotone.

The final statement follows from the fact that for a metric space
in which every closed bounded set is compact, Menger convexity is equivalent
to being geodesic \cite[Theorem 2.6.2]{Papa}.  \done
\end{proof}

The intersection of $\ell_1$-convex sets need not be $\ell_1$-convex, and can
in fact be highly irregular.  For example, every closed subset of the line
occurs, up to isometry, as the intersection of a pair of $\ell_1$-convex
subsets of the plane.  Indeed, if $\emptyset \neq K \sub \reals$ is closed
then the sets
\[
\bigl\{ 
{\textstyle\frac{1}{2}}\bigl(x - d(x, K), x + d(x, K)\bigr) 
\such
x \in \reals 
\bigr\}
\sub \ell_1^2,
\qquad
\bigl\{ 
{\textstyle\frac{1}{2}}(x, x) 
\such
x \in \reals 
\bigr\}
\sub \ell_1^2
\]
are $\ell_1$-convex and have intersection isometric to $K$.  We do, however,
have the following.

\begin{defn}
An \demph{interval} in $\reals^n$ is a subset of the form $\prod_{i = 1}^n
I_i$ for some (possibly empty, possibly unbounded) intervals $I_1, \ldots, I_n
\sub \reals$.
\end{defn}

\begin{cor}     \label{cor:intersection}
The intersection of an $\ell_1$-convex set and an interval in $\reals^n$ is
$\ell_1$-convex.
\end{cor}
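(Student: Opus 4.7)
The plan is to use the characterization of $\ell_1$-convexity via monotone paths, namely the equivalence of parts (i) and (iii) in Proposition~\ref{propn:lc-eqv}. Let $X \sub \reals^n$ be $\ell_1$-convex and let $J = \prod_{i=1}^n I_i$ be an interval. I want to show that $X \cap J$ is $\ell_1$-convex, so by the proposition it suffices to show that any two points of $X \cap J$ can be joined by a monotone path inside $X \cap J$.

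Given $x, x' \in X \cap J$, apply part (iii) of the proposition to $X$ to obtain a monotone path $\gamma\cln [c, c'] \to X$ with $\gamma(c) = x$ and $\gamma(c') = x'$. The same path will do the job: I claim $\gamma$ automatically takes values in $J$. Indeed, each coordinate $\gamma_i$ is monotone on $[c, c']$ with $\gamma_i(c) = x_i \in I_i$ and $\gamma_i(c') = x'_i \in I_i$, so $\gamma_i(t)$ lies in the closed subinterval of $\reals$ with endpoints $x_i$ and $x'_i$, which is contained in $I_i$ since $I_i$ is an interval. Hence $\gamma(t) \in \prod I_i = J$ for every $t$, so $\gamma$ is a monotone path in $X \cap J$ joining $x$ and $x'$.

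There is essentially no obstacle here: the proof is a one-line combination of the monotone-path characterization with the defining coordinate-wise structure of intervals. The only thing to watch is that one uses part (iii) rather than part (ii) of the proposition, since a general length-minimizing path in $X$ need not stay in $J$, whereas a monotone path trivially does.
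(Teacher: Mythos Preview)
Your proof is correct and is essentially the same as the paper's: both invoke Proposition~\ref{propn:lc-eqv}(\ref{part:lc-eqv-mono}) and the observation that any monotone path between two points of an interval stays in that interval. Your write-up simply spells out the coordinate-wise verification that the paper compresses into one sentence.
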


\begin{proof}
An interval $I$ has the property that whenever $x, x' \in I$, every
monotone path from $x$ to $x'$ in $\reals^n$ lies in $I$.  The result follows
from Proposition~\ref{propn:lc-eqv}(\ref{part:lc-eqv-mono}).
\done
\end{proof}

\begin{remark}  \label{rmk:cogeodesic}
Corollary~\ref{cor:intersection} might seem weak when compared with the result
in Euclidean space that the intersection of \emph{any} pair of convex sets is
convex.  But in fact, the $\ell_1$ and Euclidean results are strictly
analogous---as long as one uses the correct analogy.  Let $A$ be a metric
space and, for $a, a' \in A$, write $\Gamma(a, a')$ for the set of
distance-preserving paths joining $a$ and $a'$.  For a subspace $X \sub A$ to
be geodesic means that whenever $x, x' \in X$,
\[
\exists \gamma \in \Gamma(x, x')\cln
\image(\gamma) \sub X.
\]
There is a dual condition: $X \sub A$ is \demph{cogeodesic} if whenever $x, x'
\in X$,
\[
\forall \gamma \in \Gamma(x, x'),\ 
\image(\gamma) \sub X.
\]
A subset of $\ell_2^n$ is cogeodesic if and only if it is convex, if and only
if it is geodesic, but a subset of $\ell_1^n$ is cogeodesic if and only if it
is an interval.  It is a logical triviality that in any metric space, the
intersection of a geodesic subset and a cogeodesic subset is geodesic.
Applied to $\ell_2^n$, this says that the intersection of two convex sets is
convex.  Applied to $\ell_1^n$, this is Corollary~\ref{cor:intersection}.
\end{remark}

A subset $X \sub \reals^n$ is \demph{orthogonally convex}~\cite{FiWo} if $X
\cap L$ is convex whenever $L$ is a straight line parallel to one of the
coordinate axes.  Corollary~\ref{cor:intersection} has the following special
case:

\begin{cor}     \label{cor:orthoconvex}
An $\ell_1$-convex set is orthogonally convex.
\done
\end{cor}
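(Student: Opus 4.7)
The plan is to derive this directly from Corollary~\ref{cor:intersection}, since the statement is flagged as a ``special case'' of it. The point is to recognize that a coordinate-parallel line is an interval in the sense just defined, apply the corollary, and then argue that $\ell_1$-convexity reduces to ordinary convexity on such a line.

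First I would observe that a straight line $L \sub \reals^n$ parallel to the $j$-th coordinate axis has the form
\[
L = \{a_1\} \times \cdots \times \{a_{j-1}\} \times \reals \times \{a_{j+1}\} \times \cdots \times \{a_n\}
\]
for some real numbers $a_i$ ($i \neq j$). Each factor is a (possibly degenerate, possibly unbounded) interval in $\reals$, so $L$ is an interval in $\reals^n$ in the sense of the definition preceding Corollary~\ref{cor:intersection}. Applying that corollary, $X \cap L$ is $\ell_1$-convex.

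Next I would note that for $x, x' \in L$, all coordinates agree except possibly the $j$-th, so the $\ell_1$-distance between them equals $|x_j - x'_j|$. Thus the subspace metric induced on $L$ from $\ell_1^n$ is just the standard Euclidean metric on $L \iso \reals$, under which the geodesic subsets are precisely the convex subsets. Hence $X \cap L$, being $\ell_1$-convex, is convex in $L$, which is exactly the orthogonal-convexity condition.

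I do not expect any real obstacle here: the only thing to be careful about is noting that the definition of ``interval'' does allow degenerate singleton factors, so that a coordinate-parallel line genuinely qualifies as an interval. Once that observation is made, the argument is a two-line chain: interval $+$ Corollary~\ref{cor:intersection} gives $\ell_1$-convexity of the intersection, and on a one-dimensional coordinate-parallel affine subspace $\ell_1$-convexity coincides with ordinary convexity.
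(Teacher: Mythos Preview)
Your argument is correct and is exactly the intended unpacking of the paper's one-line ``special case'' remark: a coordinate-parallel line is an interval, Corollary~\ref{cor:intersection} makes $X \cap L$ $\ell_1$-convex, and on such a line the $\ell_1$ metric agrees with the standard metric on $\reals$, so $\ell_1$-convexity is ordinary convexity.
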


On the other hand, an orthogonally convex set need not be $\ell_1$-convex,
even if it is connected.  For example, choose a vector $v \in \reals^3$ none
of whose coordinates is $0$, and consider the set of unit-length vectors in
$\ell_2^3$ orthogonal to $v$.  

A \demph{coordinate subspace} of $\reals^n$ is a linear subspace $P$ spanned
by some subset of the standard basis.  We write $\pi_P$ for the orthogonal
projection of $\reals^n$ onto $P$, and $P^\perp$ for the orthogonal complement
of $P$ (with respect to the standard inner product).  By
Proposition~\ref{propn:lc-eqv}(\ref{part:lc-eqv-mono}), we have:

\begin{cor}     \label{cor:pjn-pres}
Let $P$ be a coordinate subspace of $\reals^n$.  Then the image under $\pi_P$
of an $\ell_1$-convex set is $\ell_1$-convex.
\done
\end{cor}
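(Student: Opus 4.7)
The plan is to invoke the monotone-path characterization in Proposition~\ref{propn:lc-eqv}(\ref{part:lc-eqv-mono}) at both ends: we start with a monotone path in $X$ and push it forward to produce a monotone path in $\pi_P(X)$. The key observation is that because $P$ is spanned by a subset of the standard basis, the projection $\pi_P\cln \reals^n \to P$ acts on a vector by retaining some coordinates and zeroing the rest. So each component of $\pi_P \circ \gamma$ is either a component $\gamma_i$ of $\gamma$ (for $i$ with the $i$th basis vector in $P$) or the zero function; in either case it is monotone.

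Concretely: let $X \sub \reals^n$ be $\ell_1$-convex and fix $y, y' \in \pi_P(X)$. Choose preimages $x, x' \in X$ with $\pi_P(x) = y$ and $\pi_P(x') = y'$. By Proposition~\ref{propn:lc-eqv}, there is a monotone path $\gamma\cln [c, c'] \to X$ with $\gamma(c) = x$ and $\gamma(c') = x'$. Then $\pi_P \circ \gamma\cln [c, c'] \to \pi_P(X)$ is continuous, joins $y$ and $y'$, and is monotone in each coordinate by the observation above. Applying Proposition~\ref{propn:lc-eqv}(\ref{part:lc-eqv-mono}) in the reverse direction (inside $P \iso \ell_1^{\dim P}$) shows $\pi_P(X)$ is $\ell_1$-convex.

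There is no real obstacle here; the whole point is that the monotone-path characterization is tailored to behave well under coordinate projections, which is precisely the kind of map preserved by the small isometry group of $\ell_1^n$. The only tiny subtlety is checking that ``monotone path'' in the sense of Definition is preserved coordinatewise under $\pi_P$, and that follows immediately from the coordinate-subspace hypothesis on $P$ (it would fail for a generic linear subspace, which is exactly why the statement is restricted to coordinate subspaces).
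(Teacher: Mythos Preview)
Your argument is correct and is exactly the approach the paper has in mind: the corollary is stated with \done\ immediately after the sentence ``By Proposition~\ref{propn:lc-eqv}(\ref{part:lc-eqv-mono}), we have,'' and your write-up is the natural unpacking of that one-line justification. The observation that $\pi_P$ acts coordinatewise (retaining some $\gamma_i$ and zeroing the rest), so that monotone paths push forward to monotone paths, is precisely the intended content.
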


There is a further positive result on intersections of $\ell_1$-convex sets.

\begin{lemma}   \label{lemma:cup-cap}
Let $X$ and $Y$ be closed subsets of $\reals^n$.  If $X$, $Y$ and $X \cup Y$
are $\ell_1$-convex, then so is $X \cap Y$.
\end{lemma}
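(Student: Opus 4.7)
The plan is to verify the Menger convexity criterion of Proposition~\ref{propn:lc-eqv}: since $X \cap Y$ is closed, I need only show that any two distinct points $x, x' \in X \cap Y$ admit some third point of $X \cap Y$ strictly between them. My plan is to produce such a point on a hyperplane slice between $x$ and $x'$.

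Since $x \neq x'$, after permuting and reflecting coordinates (which are $\ell_1$-isometries) I may assume $x_1 < x'_1$. Pick $c \in (x_1, x'_1)$, let $B = \prod_i [\min(x_i, x'_i), \max(x_i, x'_i)]$ be the bounding box, and set $B^* = B \cap \{z \such z_1 = c\}$. Any point of $X \cap Y \cap B^*$ is automatically between $x$ and $x'$ (being in $B$) and distinct from both (having first coordinate $c$), hence strictly between them; so it suffices to find one.

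Both $B$ and $B^*$ are intervals in $\reals^n$, so by Corollary~\ref{cor:intersection} the sets $X \cap B^*$, $Y \cap B^*$ and their union $(X \cup Y) \cap B^*$ are all closed and $\ell_1$-convex. For non-emptiness of $X \cap B^*$: the set $X \cap B$ is $\ell_1$-convex and compact, so Corollary~\ref{cor:pjn-pres} makes its projection onto the first axis an $\ell_1$-convex subset of $[x_1, x'_1]$, that is, an interval; it is closed by compactness and contains both $x_1$ and $x'_1$, hence equals $[x_1, x'_1]$. So some $p \in X \cap B$ has $p_1 = c$, i.e.\ $p \in X \cap B^*$; symmetrically pick $q \in Y \cap B^*$.

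The crux is a connectedness argument on the slice. Using $\ell_1$-convexity of $(X \cup Y) \cap B^*$ and Proposition~\ref{propn:lc-eqv}, take a monotone path $\gamma$ in this union joining $p$ and $q$. The preimages $\gamma^{-1}(X \cap B^*)$ and $\gamma^{-1}(Y \cap B^*)$ are closed subsets of the (connected) domain interval whose union is the whole domain, and each contains an endpoint so is nonempty; connectedness forces them to meet, and $\gamma$ sends any common parameter to a point of $X \cap Y \cap B^*$, as required. The only really non-routine move is to slice by the hyperplane $\{z_1 = c\}$, which converts the geometric problem into a one-dimensional connectedness fact; the existing corollaries then do the rest.
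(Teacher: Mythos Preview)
Your proof is correct, and its core is the same as the paper's: verify Menger convexity by producing a point $p\in X$ and a point $q\in Y$ that lie strictly between the two given points, take a geodesic in $X\cup Y$ from $p$ to $q$, and use that the closed preimages $\gamma^{-1}(X)$ and $\gamma^{-1}(Y)$ cover the connected domain and hence meet. The difference is only in how strict betweenness of the resulting point is secured. The paper picks $p$ and $q$ directly via Menger convexity of $X$ and of $Y$, then invokes a transitivity property of $\ell_1$-betweenness (any point between $p$ and $q$ is strictly between the original pair). You instead confine everything to the slice $B^* = B\cap\{z_1=c\}$, so that strict betweenness is forced by the first coordinate, at the price of invoking Corollaries~\ref{cor:intersection} and~\ref{cor:pjn-pres} to populate the slice. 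Your route is a little longer but makes the ``strictly between'' step entirely transparent, while the paper's is shorter but leans on an unproved betweenness lemma.
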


\begin{proof}
We use the following property of $\ell_1^n$: if a point $a$ is between points
$x$ and $y$, and if $x$ and $y$ are both strictly between points $z$ and $z'$,
then $a$ is strictly between $z$ and $z'$.

We prove that $X \cap Y$ is Menger convex.  Let $z, z' \in X \cap Y$ with $z
\neq z'$.  Since $X$ and $Y$ are Menger convex, we may choose points $x \in X$
and $y \in Y$ strictly between $z$ and $z'$.  Since $X \cup Y$ is
$\ell_1$-convex, we may choose a distance-preserving path $\gamma\cln [c, c']
\to X \cup Y$ joining $x$ and $y$.  Since $X$ and $Y$ are closed, we may
choose $t \in [c, c']$ with $\gamma(t) \in X \cap Y$.  Then $\gamma(t)$ is
between $x$ and $y$, hence strictly between $z$ and $z'$, as required.  \done
\end{proof}

The interior and closure of a convex set are convex.  The interior of an
$\ell_1$-convex set need not be $\ell_1$-convex: consider $[-1, 0]^2 \cup [0,
1]^2 \sub \reals^2$.  On the other hand, we have the following.

\begin{lemma}   \label{lemma:closure}
The closure of an $\ell_1$-convex set is $\ell_1$-convex.
\end{lemma}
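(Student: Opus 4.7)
The plan is to reduce to showing that $\ovln{X}$ is Menger convex: since $\ovln{X}$ is closed, Proposition~\ref{propn:lc-eqv} then immediately gives $\ell_1$-convexity. So fix distinct points $z, z' \in \ovln{X}$; the goal is to exhibit some $y \in \ovln{X}$ strictly between them.

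Pick sequences $(x_k), (x'_k)$ in $X$ with $x_k \to z$ and $x'_k \to z'$; for large $k$ we have $x_k \neq x'_k$. Using Proposition~\ref{propn:lc-eqv}(\ref{part:lc-eqv-length}), $\ell_1$-convexity of $X$ supplies, for each such $k$, a distance-preserving path $\gamma_k\cln [0, d(x_k, x'_k)] \to X$ joining $x_k$ and $x'_k$. Let $y_k = \gamma_k\bigl(d(x_k, x'_k)/2\bigr) \in X$, so that $d(x_k, y_k) = d(y_k, x'_k) = d(x_k, x'_k)/2$. Since $(x_k)$ converges and $d(x_k, y_k)$ is bounded, the sequence $(y_k)$ is bounded in $\reals^n$; passing to a subsequence, assume $y_k \to y$. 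Then $y \in \ovln{X}$, and by continuity of the metric
\[
d(z, y) = d(y, z') = \tfrac{1}{2} d(z, z') > 0.
\]
Thus $d(z, y) + d(y, z') = d(z, z')$ with $y \notin \{z, z'\}$, so $y$ is strictly between $z$ and $z'$, establishing Menger convexity of $\ovln{X}$.

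There is no serious obstacle here; the only delicacy is that $X$ itself is not assumed closed, so the Menger-convex clause of Proposition~\ref{propn:lc-eqv} is not directly available for $X$. That is why the proof routes through the existence of a distance-preserving path in $X$ (which \emph{is} available regardless of closedness) and extracts a midpoint before taking limits.
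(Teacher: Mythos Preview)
Your proof is correct and follows essentially the same route as the paper: reduce to Menger convexity of $\ovln{X}$, approximate two given closure points by sequences in $X$, use $\ell_1$-convexity of $X$ to obtain midpoints, and pass to a subsequential limit. The only difference is cosmetic---you obtain the midpoint via an explicit distance-preserving path, whereas the paper simply asserts the existence of such a midpoint from the geodesic property.
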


\begin{proof}
We prove that the closure $\ovln{X}$ of an $\ell_1$-convex set $X$ is Menger
convex.  Let $x, y \in \ovln{X}$.  Choose sequences
$(x_r)$ and $(y_r)$ in $X$ converging to $x$ and $y$.  Choose for each $r$ a
point $z_r \in X$ with $d(x_r, z_r) = d(z_r, y_r) = d(x_r, y_r)/2$.  The
sequence $(z_r)$ is bounded, so has a subsequence convergent to some point $z
\in \ovln{X}$.  Then $d(x, z) = d(z, y) = d(x, y)/2$.
\done
\end{proof}

\section{Minkowski sums}
\label{sec:Min}

In the Euclidean context~\cite{SchnCBB}, there are basic laws
governing the algebra of intersections and Minkowski sums: (i)~if $X$ and $I$
are convex then so is $X \cap I$; (ii)~if $X$ and $I$ are convex then so is $X
+ I$; and (iii)~if $X, Y$ are closed with $X \cup Y$ convex, and $I$ is
convex, then $(X \cap Y) + I = (X + I) \cap (Y + I)$.

In the $\ell_1$ context, we already have an analogue of~(i)
(Corollary~\ref{cor:intersection}).  Here we prove analogues of~(ii) and~(iii).
As in Remark~\ref{rmk:cogeodesic}, the analogy entails replacing some
occurrences of the term `convex set' by `$\ell_1$-convex set', and others by
`interval'.

First we note that the class of $\ell_1$-convex sets is not closed under
Minkowski sums.

\begin{example}
Given $x, y \in \reals^n$, write $\ls{x}{y}$ for the closed straight line
segment between $x$ and $y$.  Given also $z \in \reals^n$, write $\lss{x}{y}{z}
= \ls{x}{y} \cup \ls{y}{z}$.  Define $X, Y \sub \reals^3$ by 
\[
X = \lss{(0, 0, 0)}{(2, 0, 0)}{(2, 2, -1)},
\quad
Y = \lss{(0, 0, 0)}{(0, -1, 2)}{(-1, -1, 2)}.
\]
Then $X$ and $Y$ are $\ell_1$-convex, but $X + Y$ is not.  Indeed, $(0, 0, 0)$
and $(1, 1, 1)$ are points of $X + Y$ distance $3$ apart in $\ell_1^3$, but
there is no point of $X + Y$ distance $3/2$ from each of them.
\end{example}

To prove our analogue of~(ii), we use the following sufficient condition for
$\ell_1$-convexity of a Minkowski sum.

\begin{lemma}   \label{lemma:plus-plus}
Let $X \sub \reals^n$ be a closed set and $I \sub \reals^n$ a compact
interval.  Suppose that for every $x, x' \in X$ satisfying $(x + I) \cap (x' +
I) = \emptyset$, there exists a point of $X$ strictly between $x$ and $x'$ in
the $\ell_1$ metric.  Then $X + I$ is $\ell_1$-convex.
\end{lemma}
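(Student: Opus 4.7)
My plan is to apply Proposition~\ref{propn:lc-eqv}: since $X + I$ is closed (being the sum of a closed set and a compact set in $\reals^n$), it suffices to show that $X + I$ is Menger convex. Given distinct $z, z' \in X + I$, fix a decomposition $z = x + i$ and $z' = x' + i'$ with $x, x' \in X$ and $i, i' \in I$, and look for a point of $X + I$ strictly between $z$ and $z'$. The argument splits according to whether $(x + I)$ and $(x' + I)$ overlap.

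In the overlapping case $(x + I) \cap (x' + I) \neq \emptyset$, I would first prove a self-contained sub-claim: the union of any two translates of an interval with nonempty intersection is $\ell_1$-convex. For points $p$ and $q$ in the two translates respectively, one assembles a point $w$ in the intersection coordinate by coordinate, with each $w_k$ lying between $p_k$ and $q_k$, because the one-dimensional intersection $(x_k + I_k) \cap (x'_k + I_k)$ is a nonempty interval meeting $[\min(p_k, q_k), \max(p_k, q_k)]$ (a short endpoint check). Concatenating the straight segments $p \to w$ inside $x + I$ and $w \to q$ inside $x' + I$ then produces a monotone path. Hence $(x + I) \cup (x' + I) \subseteq X + I$ is a closed $\ell_1$-convex subset, therefore Menger convex, and so contains a strict midpoint of $z$ and $z'$.

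In the disjoint case $(x + I) \cap (x' + I) = \emptyset$, the hypothesis supplies $y \in X$ strictly between $x$ and $x'$. The candidate midpoint is $m = y + j$, where for each coordinate $k$ I set $j_k = (1-s_k) i_k + s_k i'_k \in I_k$, and $s_k \in [0, 1]$ is chosen so that $y_k = (1-s_k) x_k + s_k x'_k$ (with $s_k$ free when $x_k = x'_k$). A direct calculation then gives $m_k = (1 - s_k) z_k + s_k z'_k$, which is automatically between $z_k$ and $z'_k$, so $m \in X + I$ lies between $z$ and $z'$ coordinate-wise, and hence in the $\ell_1$ sense.

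The main obstacle is strictness: I must arrange $m \neq z$ and $m \neq z'$. Since the translates are disjoint, there is a coordinate $k^*$ with $|x_{k^*} - x'_{k^*}|$ exceeding the length of $I_{k^*}$, which forces $z_{k^*} \neq z'_{k^*}$, and the plan is to exploit the free choice of $s_k$ at coordinates where $x_k = x'_k$ together with the strict betweenness of $y$ to secure $s_k \in (0, 1)$ at some coordinate where $z_k \neq z'_k$. In the residual degenerate scenario where the construction still yields $m = z$, the identity $z = y + j$ puts the pair $(x, y)$ into the overlapping case and shows $z \in y + I$, so the first-case construction can be redeployed on the new pair $(y, x')$; iterating---with the $\ell_1$-distance to $x'$ strictly decreasing at each step and a compactness argument in the closed set $X$ producing a limit point---eventually delivers an overlapping pair, and hence the desired strict midpoint of $z$ and $z'$.
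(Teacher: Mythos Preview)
Your overall architecture matches the paper's---reduce to Menger convexity of the closed set $X+I$, then split on whether $(x+I)$ and $(x'+I)$ meet---and your overlapping case is fine (the paper argues it slightly differently, via the Helly property that three pairwise-intersecting intervals in $\reals^n$ have common intersection). The real issue is the disjoint case. Your construction $m = y + j$ is exactly a concrete witness that $\cubint{z}{z'}\cap(y+I)\neq\emptyset$, and you correctly identify that strictness ($m\neq z$ and $m\neq z'$) is the crux. But your proposed cure---iterate and pass to a limit point---does not close. The hypothesis only hands you \emph{some} strictly-between point at each stage, so the decreasing sequence $d(x^{(k)},x')$ need not approach the infimum of $d(a,b)$ over $\{a\in X: z\in a+I\}\times\{b\in X: z'\in b+I\}$; a subsequential limit $(a^*,b^*)$ can perfectly well still have disjoint translates and still produce $m^*=z$, with no contradiction in sight. (Also, you only describe the $m=z$ branch; the $m=z'$ branch forces you to move the \emph{other} endpoint, so the iteration is really over pairs.)

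The missing idea is the one the paper uses: choose the representatives $x,x'$ at the outset to \emph{minimize} $d(x,x')$ subject to $z\in x+I$ and $z'\in x'+I$ (the minimum exists since $X\cap(z-I)$ and $X\cap(z'-I)$ are compact). Then in the disjoint case, if your $m$ equalled $z$ you would have $z\in y+I$, so $(y,x')$ would be an admissible pair with strictly smaller distance---contradiction; symmetrically for $m=z'$. With that single minimality device, your explicit $m$ works immediately and no iteration is needed. The paper phrases the same conclusion non-constructively: minimality forces $z,z'\notin y+I$, and then any point of $\cubint{z}{z'}\cap(y+I)$ is automatically strictly between.
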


\begin{proof}
The topological hypotheses imply that $X + I$ is closed, so by
Proposition~\ref{propn:lc-eqv}, it is enough to prove that $X + I$ is Menger
convex.  Let $y$ and $y'$ be distinct points of $X + I$.  Write
$\cubint{y}{y'}$ for the interval consisting of the points between $y$ and
$y'$.  Since $X$ is closed and $I$ is compact, we may choose $x, x' \in X$
such that $y \in x + I$, $y' \in x' + I$, and $d(x, x')$ is minimal for all
such pairs $(x, x')$.

The proof is in two cases.  First suppose that $(x + I) \cap (x' + I) =
\emptyset$.  By hypothesis, we may choose a point $z \in X$ strictly between
$x$ and $x'$.  By 
minimality, $y \not\in z + I$ and $y' \not\in z + I$.  Also, $y \in x + I$, $y'
\in x' + I$, and $z$ is between $x$ and $x'$, from which it follows that
$\cubint{y}{y'} \cap (z + I) \neq \emptyset$.  Any point in this intersection
is strictly between $y$ and $y'$.  

Now suppose that $(x + I) \cap (x' + I) \neq \emptyset$.  If $y$ or $y'$ is in
$(x + I) \cap (x' + I)$ then $(y + y')/2$ is a point of $X + I$ strictly
between $y$ and $y'$.  If not, it is enough to prove that
\[
\cubint{y}{y'} \cap (x + I) \cap (x' + I) \neq \emptyset.
\]
This follows from the fact that if $J_1, J_2, J_3$ are intervals in $\reals^n$
whose pairwise intersections are all nonempty, then $J_1 \cap J_2 \cap J_3$ is
also nonempty.  \done
\end{proof}

\begin{propn}   \label{propn:Min}
The Minkowski sum of a closed $\ell_1$-convex set and an interval
is $\ell_1$-convex. 
\end{propn}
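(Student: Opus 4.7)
The plan is to reduce the proposition to the compact case already handled by Lemma \ref{lemma:plus-plus}, using an exhaustion by compact intervals. Write $I = \bigcup_{k \geq 1} I_k$ where $I_k = I \cap [-k, k]^n$; each $I_k$ is a compact interval, and $X + I = \bigcup_k (X + I_k)$, since any point $x + u \in X + I$ lies in $X + I_k$ once $k \geq \max_i |u_i|$.

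First I would show each $X + I_k$ is $\ell_1$-convex by invoking Lemma \ref{lemma:plus-plus}. Since $X$ is closed and $\ell_1$-convex, Proposition \ref{propn:lc-eqv} says $X$ is Menger convex, so any two distinct points of $X$ have a point of $X$ strictly between them. Whenever $I_k$ is nonempty and $(x + I_k) \cap (x' + I_k) = \emptyset$, the points $x, x'$ must be distinct (otherwise their translates would coincide), so such a between-point exists, verifying the hypothesis of Lemma \ref{lemma:plus-plus}. The edge cases are trivial: if $I = \emptyset$ then $X + I = \emptyset$, and empty $I_k$ may simply be skipped in the exhaustion.

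Finally, given any $y, y' \in X + I$, write $y = x + u$ and $y' = x' + v$ with $x, x' \in X$ and $u, v \in I$, and choose $k$ large enough that $u, v \in I_k$. Then $y, y' \in X + I_k$, and $\ell_1$-convexity of $X + I_k$ supplies a monotone distance-preserving path from $y$ to $y'$ inside $X + I_k \subseteq X + I$. By Proposition \ref{propn:lc-eqv}\,(\ref{part:lc-eqv-length}), this suffices to conclude $X + I$ is $\ell_1$-convex. The main (and mild) obstacle is just the non-compactness of $I$: one must notice that any pair of points from $X + I$ involves only finitely much of $I$, so the exhaustion works; and that $\ell_1$-convexity of $X + I$ does not require $X + I$ to be closed (which it may fail to be when $I$ is unbounded), so we can bypass any additional closure argument.
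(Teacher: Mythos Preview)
Your approach is essentially identical to the paper's: exhaust $I$ by compact subintervals, apply Lemma~\ref{lemma:plus-plus} to each term (your explicit verification of its hypothesis via Menger convexity is exactly what the paper leaves implicit), and then use that $\ell_1$-convexity passes to nested unions, which you spell out directly via Proposition~\ref{propn:lc-eqv}.

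One technical slip to fix: the paper's notion of interval allows non-closed factors, so your choice $I_k = I \cap [-k,k]^n$ need not be compact (take $I = (0,1)^n$). Since Lemma~\ref{lemma:plus-plus} genuinely needs compactness of the interval (its proof uses closedness of $X+I$ to invoke the Menger criterion), you should instead exhaust $I$ by an increasing sequence of \emph{compact} subintervals---e.g.\ take products of closed subintervals of the one-dimensional factors---after which your argument goes through verbatim.
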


\begin{proof}
Let $X \sub \reals^n$ be a closed $\ell_1$-convex set, and let $I \sub
\reals^n$ be an interval.  We may write $I$ as a union of compact subintervals
$I^1 \sub I^2 \sub \cdots$.  By Lemma~\ref{lemma:plus-plus}, $X + I^r$ is
$\ell_1$-convex for each $r \geq 1$.  But the class of $\ell_1$-convex sets is
closed under nested unions, so $\bigcup_r (X + I^r) = X + I$ is
$\ell_1$-convex.  \done
\end{proof}

Here is our analogue of law~(iii).  The proof is similar to the proof
of the Euclidean case (Lemma~3.1.1 of~\cite{SchnCBB}).

\begin{propn}   \label{propn:Min-cap}
Let $X, Y, I \sub \reals^n$.  Then
\[
(X \cup Y) + I = (X + I) \cup (Y + I).
\]
If $X$ and $Y$ are closed with $X \cup Y$ $\ell_1$-convex, and $I$ is an
interval, then also
\[
(X \cap Y) + I = (X + I) \cap (Y + I).
\]
\end{propn}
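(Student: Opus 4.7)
The plan is to dispose of the first identity---a general distributivity fact about Minkowski sums, valid for arbitrary $X$, $Y$, $I$---in a sentence, and then focus on the second. The inclusion $(X \cap Y) + I \sub (X + I) \cap (Y + I)$ is immediate from monotonicity of Minkowski sum. For the reverse inclusion I fix $z \in (X + I) \cap (Y + I)$ and aim to produce a single point $w \in X \cap Y$ with $z - w \in I$.

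Write $z = x + i_1 = y + i_2$ with $x \in X$, $y \in Y$, $i_1, i_2 \in I$, so that $x$ and $y$ both lie in the translated interval $z - I$. Since $X \cup Y$ is $\ell_1$-convex, Proposition~\ref{propn:lc-eqv}(\ref{part:lc-eqv-mono}) supplies a monotone path $\gamma \cln [c, c'] \to X \cup Y$ joining $x$ and $y$. Because intervals in $\reals^n$ are cogeodesic in $\ell_1^n$ (Remark~\ref{rmk:cogeodesic}), the image of $\gamma$ is trapped inside $z - I$.

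I would then set $t^* = \sup \{ t \in [c, c'] \such \gamma(t) \in X \}$. Closedness of $X$ gives $\gamma(t^*) \in X$. If $t^* = c'$ then $\gamma(t^*) = y \in Y$; otherwise $\gamma(t) \in Y$ for every $t > t^*$, and closedness of $Y$ combined with continuity of $\gamma$ again gives $\gamma(t^*) \in Y$. Setting $w = \gamma(t^*) \in X \cap Y$ and $i = z - w \in I$ produces $z = w + i \in (X \cap Y) + I$, as required.

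The step I expect to be the heart of the argument is the interplay between the geodesic property of $X \cup Y$ (supplying a path from $x$ to $y$) and the cogeodesic property of the interval $z - I$ (trapping that path inside $z - I$): this is precisely the duality of Remark~\ref{rmk:cogeodesic}, and it is what forces the path furnished by $\ell_1$-convexity of $X \cup Y$ to pass through a useful point of $X \cap Y$ already sitting in $z - I$. Everything else is a standard supremum / intermediate-value argument using the closedness hypotheses on $X$ and $Y$, entirely in the spirit of the proof of Lemma~\ref{lemma:cup-cap}.
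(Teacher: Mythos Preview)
Your proposal is correct and follows essentially the same route as the paper's proof: both write $z = x + a = y + b$, take a monotone path $\gamma$ in $X\cup Y$ from $x$ to $y$, argue that the path stays inside $z - I$ (the paper phrases this dually by setting $\alpha(t) = z - \gamma(t)$ and noting that the monotone path $\alpha$, having endpoints in $I$, lies in $I$), and then use closedness of $X$ and $Y$ to locate a point of $X\cap Y$ on $\gamma$. Your supremum argument just makes explicit the connectedness step that the paper compresses into the single sentence ``Since $X$ and $Y$ are closed, there exists $t$ with $\gamma(t)\in X\cap Y$.''
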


\begin{proof}
The first equation is trivial.  In the second, the left-hand side is certainly
a subset of the right-hand side.  For the converse, let $z \in (X + I) \cap (Y
+ I)$, writing
\[
z = x + a = y + b
\]
($x \in X$, $y \in Y$, $a, b \in I$).  Choose a monotone path $\gamma\cln [0,
1] \to X \cup Y$ joining $x$ and $y$.  Define a path $\alpha\cln [0, 1] \to
\reals^n$ by $\alpha(t) = z - \gamma(t)$.  Since $I$ is an interval and
$\alpha$ is a monotone path whose endpoints are in $I$, the whole image of
$\alpha$ lies in $I$.  Since $X$ and $Y$ are closed, there exists $t \in [0,
1]$ such that $\gamma(t) \in X \cap Y$.  Then $z = \gamma(t) + \alpha(t) \in
(X \cap Y) + I$, as required.  \done
\end{proof}

Example~\ref{eg:ball-Steiner} shows that the second part of
Proposition~\ref{propn:Min-cap} can fail when $I$ is merely $\ell_1$-convex.

\begin{cor}   \label{cor:pjn-cap}
Let $X, Y \sub \reals^n$ and let $P$ be a coordinate subspace of $\reals^n$.
Then
\[
\pi_P(X \cup Y) = \pi_P X \cup \pi_P Y.
\]
If $X$ and $Y$ are closed and $X \cup Y$ is $\ell_1$-convex then also
\[
\pi_P(X \cap Y) = \pi_P X \cap \pi_P Y.
\]
\end{cor}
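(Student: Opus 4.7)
The first equation is the trivial set-theoretic fact that any function preserves unions, so the only real content is the second equation. The plan is to reduce it to Proposition~\ref{propn:Min-cap} by taking the interval $I$ in that proposition to be $P^\perp$.

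First I would observe that $P^\perp$ is itself a coordinate subspace of $\reals^n$, hence of the form $\prod_{i=1}^n I_i$ where each $I_i$ is either $\reals$ or $\{0\}$; in particular $P^\perp$ is an interval in the sense of the definition in Section~\ref{sec:con}. This is the step that makes Proposition~\ref{propn:Min-cap} applicable.

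Next, for any subset $Z \sub \reals^n$, the set $Z + P^\perp$ is exactly the $\pi_P$-saturation of $Z$, i.e.\ $Z + P^\perp = \pi_P^{-1}(\pi_P Z)$. Consequently $\pi_P(Z + P^\perp) = \pi_P Z$, and for any family of $P^\perp$-invariant sets $W_\alpha$ (meaning $W_\alpha + P^\perp = W_\alpha$) we have $\pi_P \bigl(\bigcap_\alpha W_\alpha\bigr) = \bigcap_\alpha \pi_P W_\alpha$, since a point $p \in P$ lies in every $\pi_P W_\alpha$ iff the fibre $\pi_P^{-1}(p) \sub P^\perp + p$ meets every $W_\alpha$, which by $P^\perp$-invariance means $\pi_P^{-1}(p) \sub W_\alpha$ for every $\alpha$.

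With these two observations in hand, the corollary is immediate. By Proposition~\ref{propn:Min-cap} applied with $I = P^\perp$, the hypotheses on $X$, $Y$ give
\[
(X \cap Y) + P^\perp = (X + P^\perp) \cap (Y + P^\perp).
\]
Applying $\pi_P$ to the left-hand side yields $\pi_P(X \cap Y)$, while applying $\pi_P$ to the right-hand side yields $\pi_P(X + P^\perp) \cap \pi_P(Y + P^\perp) = \pi_P X \cap \pi_P Y$ by the $P^\perp$-invariance of $X + P^\perp$ and $Y + P^\perp$. I do not expect any serious obstacle here; the only thing worth pausing over is the identification of $P^\perp$ as an interval, after which the corollary is a formal consequence of Proposition~\ref{propn:Min-cap}.
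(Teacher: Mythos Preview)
Your proposal is correct and follows essentially the same approach as the paper: both apply Proposition~\ref{propn:Min-cap} with $I = P^\perp$ (an interval) and then read off the projection identity. The paper compresses your saturation/invariance argument into the single formula $\pi_P Z = (Z + P^\perp) \cap P$, which yields the result in one line, but the content is the same.
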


\begin{proof}
This follows from Proposition~\ref{propn:Min-cap}, since $\pi_P Z = (Z +
P^\perp) \cap P$ for all $Z \sub \reals^n$. 
\ \mbox{}\ \done
\end{proof}

\section{Approximation of $\ell_1$-convex sets}
\label{sec:app}

Essential to our proof of the $\ell_1$ Hadwiger theorem is the result, proved
in this section, that a compact $\ell_1$-convex set can be approximated
arbitrarily well by a finite union of cubes \emph{that is itself
$\ell_1$-convex}.

Write $\ucube{n} = [-\hlf, \hlf]^n$ for the unit $n$-cube, and $\hi = \{ m +
\hlf \such m \in \integers\}$ for the set of half-integers.  Given $X \sub
\reals^n$ and $\lambda \geq 0$, write $\lambda X = \{\lambda x \such x \in
X\}$.  When $\lambda > 0$, write
\[
\otr{X}{\lambda}
=
\bigcup
\bigl\{ 
\lambda (h + \ucube{n})
\such 
h \in \his{n} \text{ with } 
\lambda(h + \ucube{n}) \cap X \neq \emptyset 
\bigr\}
\supseteq
X.
\]

\begin{propn}   \label{propn:lc-outer}
Let $X \sub \reals^n$ be an $\ell_1$-convex set and $\lambda > 0$.  Then
$\otr{X}{\lambda}$ is $\ell_1$-convex.
\end{propn}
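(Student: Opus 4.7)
By rescaling, we may assume $\lambda = 1$. I will verify the condition of Proposition~\ref{propn:lc-eqv}(\ref{part:lc-eqv-mono}): every two points of $\otr{X}{1}$ are joined by a monotone path in $\otr{X}{1}$. The plan is to take a monotone path in $X$ between auxiliary points $x, x'$ chosen near the target points $y, y'$, and to ``widen'' it to a monotone path in $\otr{X}{1}$ using the grid cubes it traverses.

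Fix $y, y' \in \otr{X}{1}$. Choose $h, h' \in \his{n}$ so that $y \in h + \ucube{n}$, $y' \in h' + \ucube{n}$ and both cubes meet $X$; pick $x \in (h + \ucube{n}) \cap X$ and $x' \in (h' + \ucube{n}) \cap X$. By $\ell_1$-convexity of $X$ there is a monotone path $\gamma$ in $X$ joining $x$ and $x'$. Since $\gamma$ is monotone, its image can be covered by a sequence of cubes $K_0 = h + \ucube{n}, K_1, \ldots, K_m = h' + \ucube{n}$ in which consecutive $K_j$ share an $(n-1)$-face and, for each coordinate~$i$, the $i$-th coordinates of the centres of $K_0, K_1, \ldots, K_m$ form a monotone sequence. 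Every $K_j$ meets $X$ (it contains a point of $\gamma$), so $K_j \sub \otr{X}{1}$.

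I then construct, by induction on $m$, a monotone path from $y$ to $y'$ within $\bigcup_j K_j$. The base case $m = 0$ is the straight segment in $K_0$, which is monotone because a cube is convex. For $m \geq 1$, write $K_1 = K_0 \pm e_i$ and by symmetry assume $K_1 = K_0 + e_i$. Let $z$ be obtained from $y$ by replacing its $i$-th coordinate by the common $i$-th coordinate of $K_0 \cap K_1$. The straight segment from $y$ to $z$ lies in $K_0$ and is monotone (only coordinate $i$ moves, and non-decreasingly, since $y \in K_0$). The inductive hypothesis applied to the sub-staircase $K_1, \ldots, K_m$ with endpoints $z, y'$ yields a monotone path from $z$ to $y'$ in $\bigcup_{j \geq 1} K_j$; concatenate.

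The point to verify is that the concatenation is globally monotone. In coordinate $i$ the first leg is non-decreasing and, because the staircase is monotone, $y'_i \geq z_i$, so the second leg is non-decreasing too. In each coordinate $j \neq i$ the first leg is constant at $y_j$ and the second leg begins at $z_j = y_j$, so the concatenation agrees with the second leg and inherits its monotonicity. The main subtlety lies in the extraction of the cube sequence $K_0, \ldots, K_m$: when $\gamma$ passes through a corner of the grid at which several coordinates change simultaneously, one must insert intermediate cubes meeting at that corner so that consecutive cubes share $(n-1)$-faces. Each such cube contains the corner point, which is a point of $\gamma \sub X$, and so still meets $X$; this is what keeps the staircase inside $\otr{X}{1}$.
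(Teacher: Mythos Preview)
Your argument is correct, but it takes a genuinely different route from the paper's proof.  The paper writes $\otr{X}{1} = L + \ucube{n}$, where $L \sub \his{n}$ is the set of centres of grid cubes meeting $X$, and then invokes Lemma~\ref{lemma:plus-plus}: it only has to produce, for any two centres $h, h' \in L$ whose cubes are disjoint, a \emph{single} intermediate centre $k \in L$ strictly between them.  One point on a monotone path in $X$ from some $x \in (h + \ucube{n}) \cap X$ to some $x' \in (h' + \ucube{n}) \cap X$ suffices.  Lemma~\ref{lemma:plus-plus} then does the work of promoting Menger convexity of $L$ (relative to $\ucube{n}$) to $\ell_1$-convexity of $L + \ucube{n}$.

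You instead bypass Lemma~\ref{lemma:plus-plus} entirely and build the monotone path in $\otr{X}{1}$ by hand, threading a ``staircase'' of grid cubes along the monotone path $\gamma$ in $X$.  This is more explicit and self-contained, at the cost of the staircase-extraction step, which you rightly flag as the subtle part.  Two remarks there: first, the extraction goes through because whenever $h_i \neq h'_i$ one has $\mathrm{sign}(h'_i - h_i) = \mathrm{sign}(x'_i - x_i)$ (since $|x_i - h_i|, |x'_i - h'_i| \leq \tfrac{1}{2}$ force $x_i \leq x'_i$ when $h_i < h'_i$), so the monotonicity direction of the cube centres matches that of $\gamma$ coordinate by coordinate; second, when $h_i = h'_i$ the path $\gamma_i$ stays in $[h_i - \tfrac{1}{2}, h_i + \tfrac{1}{2}]$ and no $\pm e_i$ step is needed.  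With these observations the sketched extraction becomes routine.  The paper's approach is shorter because the interpolation between discrete centres and continuous cubes has already been packaged into Lemma~\ref{lemma:plus-plus}; yours trades that dependency for an explicit inductive construction.
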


\begin{proof}
Assume without loss of generality that $\lambda = 1$.  Put 
\[
L 
=
\{ h \in \his{n}
\such
(h + \ucube{n}) \cap X \neq \emptyset \},
\]
so that $X_1 = L + \ucube{n}$.  We show that the closed set $L$ and the
interval $\ucube{n}$ satisfy the conditions of Lemma~\ref{lemma:plus-plus}.
The result will follow.

Let $h, h' \in L$ with $(h + \ucube{n}) \cap (h' + \ucube{n}) = \emptyset$,
assuming without loss of generality that $h_1 + \hlf < h'_1 - \hlf$.  Choose
$x \in (h + \ucube{n}) \cap X$ and $x' \in (h' + \ucube{n}) \cap X$.  Then
\[
x_1 \leq h_1 + \hlf < h'_1 - \hlf \leq x'_1.
\]
By $\ell_1$-convexity, there exists a monotone path from $x$ to $x'$ in $X$;
this contains a point $z$ with $h_1 + \hlf < z_1 < h'_1 - \hlf$.
Since $z$ is between $x$ and $x'$, we have $z \in k + \ucube{n}$ for some $k
\in \his{n}$ between $h$ and $h'$.  Then $k \in L$.  The constraints on $z_1$
force $h_1 < k_1 < h'_1$, so $h \neq k \neq h'$. 
\done
\end{proof}

For $\lambda > 0$, let us say that a subset of $\reals^n$ is
\demph{$\lambda$-pixellated} if it is a finite union of cubes of the form
$\lambda(h + \ucube{n})$ ($h \in \his{n}$).  A set is \demph{pixellated} if it
is $\lambda$-pixellated for some $\lambda > 0$.  The role of pixellated sets
in the $\ell_1$ theory is similar to that of polyhedra in the Euclidean
theory.  Proposition~\ref{propn:lc-outer} implies:

\begin{thm}   \label{thm:dense}
The set of pixellated $\ell_1$-convex subsets of $\reals^n$ is dense in the
space of compact $\ell_1$-convex subsets of $\reals^n$, with respect to the
Hausdorff metric.  \done
\end{thm}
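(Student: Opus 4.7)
The plan is to show that for any compact $\ell_1$-convex set $X$ and any $\epsln > 0$, the set $\otr{X}{\lambda}$ provides a pixellated $\ell_1$-convex approximation of $X$ within Hausdorff distance $\epsln$, provided $\lambda$ is chosen sufficiently small.

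First I would verify that $\otr{X}{\lambda}$ is in fact pixellated when $X$ is compact: the defining union ranges over $h \in \his{n}$ such that $\lambda(h + \ucube{n})$ meets $X$, and compactness of $X$ forces this index set to be finite (since $X$ is contained in some bounded box, and only finitely many half-integer translates of the cube meet a bounded set). Thus $\otr{X}{\lambda}$ is by definition a $\lambda$-pixellated set.

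Next, $\ell_1$-convexity of $\otr{X}{\lambda}$ is supplied directly by Proposition~\ref{propn:lc-outer}, so no additional work is needed on that front.

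Finally I would estimate the Hausdorff distance. Since $\otr{X}{\lambda} \supseteq X$, one direction is trivial: every point of $X$ is at distance $0$ from $\otr{X}{\lambda}$. For the other direction, any point $y \in \otr{X}{\lambda}$ lies in some cube $\lambda(h + \ucube{n})$ that meets $X$, and the $\ell_1$-diameter of such a cube is $n\lambda$; hence $y$ is within $\ell_1$-distance $n\lambda$ of some point of $X$. Choosing $\lambda < \epsln/n$ yields $d_H(X, \otr{X}{\lambda}) \leq n\lambda < \epsln$, completing the approximation. No step looks substantial here: the real content is already packaged in Proposition~\ref{propn:lc-outer}, and what remains is a two-line diameter bound plus the observation that compactness of $X$ reduces the union defining $\otr{X}{\lambda}$ to a finite one.
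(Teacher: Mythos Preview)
Your proposal is correct and is exactly the argument the paper has in mind: the paper simply states that the theorem follows from Proposition~\ref{propn:lc-outer} and writes no further details, so you have merely spelled out the routine verifications (finiteness of the covering when $X$ is compact, and the diameter bound for the Hausdorff estimate) that the paper leaves implicit.
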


In a later proof, we will use a hyperplane to divide a pixellated
$\ell_1$-convex set into two smaller sets.  We will need the following lemma.

\begin{lemma}   \label{lemma:divide}
Let $\lambda > 0$ and let $X \sub \reals^n$ be a $\lambda$-pixellated
$\ell_1$-convex set.  Write
\begin{align*}
X^+     &=
\bigcup 
\bigl\{ 
\lambda(h + \ucube{n})
\such
h \in \his{n} 
\text{ with } \lambda(h + \ucube{n}) \sub X
\text{ and } h_1 > 0 
\bigr\},        \\
X^-     &=
\bigcup 
\bigl\{ 
\lambda(h + \ucube{n})
\such
h \in \his{n} 
\text{ with } \lambda(h + \ucube{n}) \sub X
\text{ and } h_1 < 0 
\bigr\}.
\end{align*}
Then $X^+$, $X^-$ and $X^+ \cap X^-$ are all $\ell_1$-convex.
\end{lemma}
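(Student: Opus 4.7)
The plan is to reduce all three statements to the $\ell_1$-convexity of $X^+$. The key observation is that $X = X^+ \cup X^-$: each cube $\lambda(h + \ucube{n})$ of the pixellation of $X$ has $h_1 \in \hi$, so $h_1 > 0$ or $h_1 < 0$, and hence lies in $X^+$ or $X^-$. Both $X^+$ and $X^-$ are closed, being finite unions of closed cubes. So once $X^+$ and $X^-$ are known to be $\ell_1$-convex, Lemma~\ref{lemma:cup-cap} applied to $X^+$, $X^-$ and their union $X$ gives the $\ell_1$-convexity of $X^+ \cap X^-$. By the symmetry between $+$ and $-$, it suffices to handle $X^+$.

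I would show $X^+$ is $\ell_1$-convex by adapting the proof of Proposition~\ref{propn:lc-outer}. Assume without loss of generality that $\lambda = 1$, and put
\[
L^+ = \{h \in \his{n} \such h_1 > 0 \text{ and } h + \ucube{n} \sub X\},
\]
so that $X^+ = L^+ + \ucube{n}$. The set $L^+$ is finite, hence closed, and $\ucube{n}$ is a compact interval, so by Lemma~\ref{lemma:plus-plus} it is enough to prove: for every $h, h' \in L^+$ with $(h + \ucube{n}) \cap (h' + \ucube{n}) = \emptyset$, some point of $L^+$ lies strictly between $h$ and $h'$.

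Given such $h, h'$, disjointness forces $|h_{j_0} - h'_{j_0}| \geq 2$ for some coordinate $j_0$; assume without loss of generality that $h'_{j_0} \geq h_{j_0} + 2$. By $\ell_1$-convexity of $X$, choose a monotone path $\gamma$ from $h$ to $h'$ in $X$, and pick $t_0$ with $\gamma_{j_0}(t_0) = h_{j_0} + 1 \in \hi$. Let $z = \gamma(t_0) \in X$; since $X$ is pixellated, there is some $k \in \his{n}$ with $k + \ucube{n} \sub X$ and $z \in k + \ucube{n}$. That $\gamma_{j_0}(t_0)$ is itself a half-integer forces $k_{j_0} = h_{j_0} + 1 \in (h_{j_0}, h'_{j_0})$, so $k \neq h, h'$. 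A short coordinate-wise check, using $\gamma_i(t_0) \in [\min(h_i, h'_i), \max(h_i, h'_i)]$ (by monotonicity) and $|k_i - \gamma_i(t_0)| \leq \hlf$, gives $k_i \in [\min(h_i, h'_i), \max(h_i, h'_i)]$ for every $i$, so $k$ is between $h$ and $h'$. Finally $k_1 > 0$: if $j_0 = 1$ then $k_1 = h_1 + 1 > 0$; otherwise $\gamma_1(t_0)$ lies between $h_1$ and $h'_1$, both $\geq \hlf$, forcing the half-integer $k_1$ within $\hlf$ of $\gamma_1(t_0)$ to satisfy $k_1 \geq \hlf$. Hence $k \in L^+$ as required.

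The main obstacle is that $z$ may lie on a face shared by several cubes, so the cube $k + \ucube{n}$ containing $z$ is not uniquely determined from $z$. This is circumvented by forcing $\gamma_{j_0}(t_0)$ to be a half-integer, which pins down $k_{j_0}$ in the critical coordinate and ensures $k$ is strictly between $h$ and $h'$; any cube of $X$ containing $z$ then yields a valid element of $L^+$.
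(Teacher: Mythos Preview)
Your argument is correct, but it takes a different and noticeably longer route than the paper's. The reduction of $X^+ \cap X^-$ to Lemma~\ref{lemma:cup-cap} via $X = X^+ \cup X^-$ is exactly what the paper does. For $X^+$ itself, however, the paper simply observes that
\[
X^+ = \overline{\,X \cap \bigl((0,\infty)\times\reals^{n-1}\bigr)\,},
\]
which is $\ell_1$-convex by Corollary~\ref{cor:intersection} (intersection with an interval) and Lemma~\ref{lemma:closure} (closure preserves $\ell_1$-convexity). That is the whole proof. You instead re-run the machinery of Proposition~\ref{propn:lc-outer}, applying Lemma~\ref{lemma:plus-plus} to $L^+ + \ucube{n}$ and producing a strictly intermediate lattice point $k$ via a monotone path in $X$, with an additional check that $k_1 > 0$. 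Your half-integer bookkeeping is sound: once $z_{j_0}$ is a half-integer the coordinate $k_{j_0}$ is pinned down, and the inequality $k_i \in [\min(h_i,h'_i),\max(h_i,h'_i)]$ follows because a half-integer within $\hlf$ of a point in that interval must itself lie in it. The $k_1 \geq \hlf$ step is likewise correct. So nothing is wrong; your approach just reproves from scratch what the paper gets for free by recognising $X^+$ as the closure of an $\ell_1$-convex set. The payoff of your method is that it avoids Lemma~\ref{lemma:closure} entirely and shows directly that the lattice technique of Proposition~\ref{propn:lc-outer} survives the extra constraint $h_1 > 0$; the payoff of the paper's method is a two-line proof.
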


\begin{proof}
$X^+$ is the closure of $X \cap ((0, \infty) \times \reals^{n - 1})$, so is
$\ell_1$-convex by Corollary~\ref{cor:intersection} and
Lemma~\ref{lemma:closure}.  Similarly, $X^-$ is $\ell_1$-convex.  Now $X^+
\cup X^- = X$, and $X$ is $\ell_1$-convex, so $X^+ \cap X^-$ is
$\ell_1$-convex by Lemma~\ref{lemma:cup-cap}.  
\done
\end{proof}

The following result will not be needed later, but is of independent interest.
It generalizes the classical fact that compact convex sets are Jordan
measurable.

\pagebreak

\begin{propn}
Every compact $\ell_1$-convex subset of $\reals^n$ is Jordan measurable.
\end{propn}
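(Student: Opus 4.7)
My plan is to prove by induction on $n$ that $\Vold{n}(\partial X) = 0$; since $X$ is closed and bounded, this is equivalent to Jordan measurability. The base case $n \leq 1$ is immediate, for a compact $\ell_1$-convex subset of $\reals$ is a closed interval by Proposition~\ref{propn:lc-eqv}(\ref{part:lc-eqv-mono}).

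For the inductive step with $n \geq 2$, write $\pi \cln \reals^n \to \reals^{n-1}$ for the projection onto the last $n - 1$ coordinates. By Corollary~\ref{cor:pjn-pres}, the set $\pi(X)$ is compact $\ell_1$-convex, hence Jordan measurable by the inductive hypothesis. For each $y \in \pi(X)$, Corollary~\ref{cor:intersection} applied with the interval $\reals \times \{y\}$ shows that the fiber $\{t \such (t, y) \in X\}$ is $\ell_1$-convex, hence a closed interval $[\alpha(y), \beta(y)]$; the function $\alpha$ is lower semicontinuous on $\pi(X)$ and $\beta$ is upper semicontinuous. I would then decompose $\partial X \sub A \cup B \cup C$, where
\begin{equation*}
A = \{(\alpha(y), y) \such y \in \pi(X)\} \cup \{(\beta(y), y) \such y \in \pi(X)\}, \quad B = \pi^{-1}(\partial \pi(X)) \cap X,
\end{equation*}
and $C$ is the ``corner'' set $\{(t, y) \in \partial X \such y \in \mathrm{int}(\pi(X)),\ t \in (\alpha(y), \beta(y))\}$. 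The set $A$ has $\Vold{n}$-measure zero as a union of graphs of single-valued functions $\pi(X) \to \reals$, and $B$ has $\Vold{n}$-measure zero because $\pi(X)$ is Jordan measurable by induction and $X$ is bounded.

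The main obstacle is showing $\Vold{n}(C) = 0$. A direct continuity argument handles the fiber structure: if both $\alpha$ and $\beta$ are continuous at a given $y \in \mathrm{int}(\pi(X))$, then for any $t \in (\alpha(y), \beta(y))$ the semicontinuities combined with continuity yield $\alpha(y') < t < \beta(y')$ and $y' \in \pi(X)$ for all $y'$ in some neighborhood of $y$, so an $\reals^n$-neighborhood of $(t, y)$ sits inside $X$ and $(t, y) \in \mathrm{int}(X)$. Hence $C$ lies over the discontinuity loci $D_\alpha, D_\beta \sub \reals^{n-1}$ of $\alpha$ and $\beta$, and since the fibers of $X$ are bounded, Fubini gives $\Vold{n}(C) = 0$ as soon as $\Vold{n-1}(D_\alpha \cup D_\beta) = 0$.

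The key input for this last reduction is that every sublevel set $\{y \in \pi(X) \such \alpha(y) \leq c\}$ equals $\pi\bigl(X \cap ((-\infty, c] \times \reals^{n-1})\bigr)$, which is compact $\ell_1$-convex by Corollaries~\ref{cor:intersection} and~\ref{cor:pjn-pres}, and so Jordan measurable by the inductive hypothesis. Since $\alpha$ is bounded and lower semicontinuous, the standard identity $D_\alpha \sub \bigcup_{c \in \mathbb{Q}} \partial\{\alpha \leq c\}$ presents $D_\alpha$ as a countable union of $\Vold{n-1}$-null sets, so $\Vold{n-1}(D_\alpha) = 0$; the argument for $\beta$ is symmetric, using superlevel sets. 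This gives $\Vold{n}(\partial X) = 0$ and completes the induction.
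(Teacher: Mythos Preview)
Your argument is correct, and it takes a genuinely different route from the paper's proof.

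The paper gives a direct, dimension-free argument using pixellation: it considers the set $D(\lambda)$ of grid cubes at scale $\lambda$ that meet $X$ but are not contained in $X$, notes that $\bdy X \sub D(\lambda)$, and observes that in every $3\times\cdots\times 3$ block of such cubes at least one must be missing from $D(\lambda)$ (because if all $2^n$ corner cubes meet $X$ then, by $\ell_1$-convexity via monotone paths, the central cube lies entirely inside $X$). This yields the geometric contraction $\Vol(D(\lambda)) \leq (1 - 3^{-n})\Vol(D(3\lambda))$, whence $\inf_\lambda \Vol(D(\lambda)) = 0$ and $\Vol(\bdy X)=0$.

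Your approach instead runs an induction on $n$ via slicing: you write $X$ as the region between graphs of semicontinuous fibre-endpoint functions $\alpha,\beta$ over $\pi(X)$, split $\bdy X$ into the two graphs, the part over $\bdy\pi(X)$, and a residual ``corner'' set, and kill the last by showing it lies over the discontinuity loci $D_\alpha\cup D_\beta$, which are null because each sublevel set $\{\alpha\le c\}=\pi\bigl(X\cap((-\infty,c]\times\reals^{n-1})\bigr)$ is again compact $\ell_1$-convex and hence Jordan measurable by induction. This leans more heavily on the structural facts already proved (Corollaries~\ref{cor:intersection} and~\ref{cor:pjn-pres}), and the reduction to the discontinuity set of a semicontinuous function is a nice real-analysis move. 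The paper's argument is shorter, self-contained, and even yields an explicit rate; yours makes clearer use of the fibrewise interval structure of $\ell_1$-convex sets and highlights that Jordan measurability is inherited along coordinate projections and half-space slices. Both are perfectly sound.
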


\begin{proof}
Let $X \sub \reals^n$ be a compact $\ell_1$-convex set.  For
$\lambda > 0$, write
\[
D(\lambda)
=
\bigcup 
\bigl\{ \lambda(h + \ucube{n})
\such
h \in \his{n} 
\text{ with } \lambda(h + \ucube{n}) \cap X \neq \emptyset
\text{ and } \lambda(h + \ucube{n}) \not\sub X \bigr\}.
\]
Write $\bdy X$ for the topological boundary of $X$.  Then $\bdy X \sub
D(\lambda)$ for all $\lambda > 0$, and we have to show that $\Vol(\bdy X) =
0$.

For $\lambda > 0$ and $h \in \his{n}$, the set $D(\lambda)$ cannot contain all
$3^n$ of the cubes $\lambda(h + \sigma + \ucube{n})$ with $\sigma \in \{-1,
0, 1\}^n$.  Indeed, if $D(\lambda)$ contains all $2^n$ corner cubes then $X$
has nonempty intersection with all the corner cubes, and then it follows
from $\ell_1$-convexity that $X$ contains the whole central cube $\lambda(h +
\ucube{n})$; hence $D(\lambda)$ does not contain the central cube.

We therefore have
\[
\Vol(D(\lambda)) 
\leq
((3^n - 1)/3^n)\Vol(D(3\lambda))
\]
for all $\lambda > 0$.  Thus, $\inf_{\lambda > 0} \Vol(D(\lambda)) = 0$,
giving $\Vol(\bdy X) = 0$.
\done
\end{proof}

\section{Valuations}
\label{sec:val}

Here we generalize the theorem of McMullen~\cite[Theorem 8]{McMVET} that a
monotone translation-invariant valuation on convex sets is continuous.  This
will make numerous continuity checks very easy.

For the rest of this section, let $\gcvx$ be a set of compact, orthogonally
convex subsets of $\reals^n$.  We suppose that $\gcvx$ contains all
singletons $\{x\}$, and that $X + I \in \gcvx$ whenever $X \in \gcvx$ and $I$
is a compact interval.  Then $\gcvx$ contains all compact intervals and is
closed under translations.  For example, $\gcvx$ might be the set $\ecvx{n}$
of compact convex subsets of $\reals^n$, or the set $\cvx{n}$ of compact
$\ell_1$-convex subsets of $\reals^n$ (by Corollary~\ref{cor:orthoconvex} and
Proposition~\ref{propn:Min}).

A \demph{valuation} on $\gcvx$ is a function $\phi\cln \gcvx \to \reals$
such that $\phi(\emptyset) = 0$ and
\[
\phi(X \cup Y) 
=
\phi(X) + \phi(Y) - \phi(X \cap Y)
\]
whenever $X, Y, X \cup Y, X \cap Y \in \gcvx$.  (The hypothesis that $X \cap Y
\in \gcvx$ is redundant for $\ecvx{n}$, and also for $\cvx{n}$, by
Lemma~\ref{lemma:cup-cap}.)

We give $\gcvx$ the Hausdorff metric induced by the $\ell_1$ metric on
$\reals^n$.  This can be defined as follows.  Writing $\ball{n}$ for
the closed unit ball of $\ell_1^n$, the \demph{Hausdorff distance} between
compact sets $X, Y \sub \reals^n$ is
\[
d(X, Y) 
=
\inf \{ \delta > 0 \such 
X \sub Y + \delta\ball{n} \text{ and } Y \sub X + \delta\ball{n} \}.
\]
By compactness, the infimum is attained.

A valuation $\phi$ is \demph{continuous} if it is continuous with
respect to the Hausdorff metric.  The notion of continuity is unaffected by
the choice of the 1-norm over other norms on $\reals^n$, since all such are
equivalent.  A valuation $\phi$ is \demph{increasing} if $X \sub Y$ implies
$\phi(X) \leq \phi(Y)$ for $X, Y \in \gcvx$, and \demph{monotone} if $\phi$ or
$-\phi$ is increasing.  It is \demph{translation-invariant} if $\phi(X + a) =
\phi(X)$ for all $X \in \gcvx$ and $a \in \reals^n$.

For $R \geq 0$, write $\gcvxr{R} = \{ X \in \gcvx \such X \sub R\ucube{n} \}$.

\begin{lemma}   \label{lemma:shrinking-ball}
Let $\phi$ be a monotone translation-invariant valuation on $\gcvx$.
Let $R \geq 0$.  Then
\[
\lim_{\delta \to 0} \phi(X + \delta \ucube{n}) = \phi(X)
\]
uniformly in $X \in \gcvxr{R}$.
\end{lemma}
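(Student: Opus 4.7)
The strategy is to reduce to one-dimensional thickening in a single coordinate direction via an exact telescoping identity, then iterate over the $n$ directions. Assume without loss of generality that $\phi$ is increasing; monotonicity together with $\phi(\emptyset) = 0$ then gives $0 \leq \phi(X) \leq \phi(S\ucube{n})$ for every $X \in \gcvxr{S}$, so $\phi$ is uniformly bounded on each $\gcvxr{S}$.

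The key lemma is a linearity identity for thickening along a single axis: fix $i \in \{1,\ldots,n\}$, $X \in \gcvx$ and $t > 0$, and write $X_{(k)} = X + kt[-\hlf, \hlf]\, e_i$. I claim
\[
\phi(X_{(k)}) - \phi(X) = k \bigl(\phi(X_{(1)}) - \phi(X)\bigr) \qquad \text{for every integer } k \geq 1.
\]
For the induction on $k$, introduce $Z_k = X + [-t/2, (2k-1)t/2]\, e_i$, a translate of $X_{(k)}$, so that $Z_{k+1} = Z_k \cup (Z_1 + kt\, e_i)$. By orthogonal convexity of $X$ (which holds since $X \in \gcvx$), each axis-$i$ slice of $X$ at a point $p' \in \reals^{n-1}$ is a closed interval $[\alpha(p'), \beta(p')]$ (possibly empty), whence the corresponding slice of $X + [a, b]\, e_i$ equals $[\alpha(p') + a, \beta(p') + b]$. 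Applied to the two abutting intervals $[-t/2, (2k-1)t/2]$ and $[(2k-1)t/2, (2k+1)t/2]$, this collapses $Z_k \cap (Z_1 + kt\, e_i)$ exactly to the translate $X + \tfrac{(2k-1)t}{2}\, e_i$. Translation invariance and the valuation property then yield $\phi(Z_{k+1}) = \phi(Z_k) + \phi(Z_1) - \phi(X)$, closing the induction. All four sets involved lie in $\gcvx$ because $\gcvx$ is closed under translations and Minkowski sums with compact intervals.

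For the uniform bound, given $\delta \in (0, 1]$ take $t = \delta$ and $k = \lfloor 1/\delta \rfloor$; since $kt \leq 1$ we have $X_{(k)} \in \gcvxr{R+1}$, so by monotonicity and the identity
\[
0 \leq \phi(X + \delta[-\hlf, \hlf]\, e_i) - \phi(X) \leq \phi((R+1)\ucube{n})/k \leq 2\delta \cdot \phi((R+1)\ucube{n}),
\]
uniformly in $X \in \gcvxr{R}$. To finish, decompose $\delta \ucube{n} = \sum_{j=1}^n \delta [-\hlf, \hlf]\, e_j$ and set $X_0 = X$, $X_j = X_{j-1} + \delta [-\hlf, \hlf]\, e_j$, so $X_n = X + \delta \ucube{n}$; for $\delta \leq 1$ each $X_{j-1}$ lies in $\gcvxr{R+1}$. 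Applying the one-direction bound in each direction in turn and summing gives
\[
0 \leq \phi(X + \delta \ucube{n}) - \phi(X) \leq 2 n \delta \cdot \phi((R+2)\ucube{n}),
\]
uniformly in $X \in \gcvxr{R}$, which tends to $0$ as $\delta \to 0$.

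The substantive difficulty is in the inductive step: identifying the intersection $Z_k \cap (Z_1 + kt\, e_i)$ as exactly a translate of $X$ relies essentially on the slice structure provided by orthogonal convexity, and it is precisely this collapse that converts the valuation property into the clean linear recursion driving the whole argument.
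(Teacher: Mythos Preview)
Your argument is correct and follows essentially the same route as the paper: reduce to thickening in a single coordinate direction, use orthogonal convexity to identify the intersection in the valuation identity as a translate of $X$, derive a linear growth estimate from monotonicity, and then telescope over the $n$ directions. The only cosmetic difference is that the paper shows the one-directional increment $f_{X,i}(\delta)=\phi(X+\nu_i[-\delta/2,\delta/2])-\phi(X)$ satisfies the Cauchy equation $f_{X,i}(\delta+\delta')=f_{X,i}(\delta)+f_{X,i}(\delta')$ for all real $\delta,\delta'\geq 0$ (hence is exactly linear by monotonicity), whereas you establish the integer-multiple version and invoke $k=\lfloor 1/\delta\rfloor$; both yield the same uniform $O(\delta)$ bound.
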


\begin{proof}
Given $1 \leq i \leq n$, write $\nu_i\cln \reals \to \reals^n$ for the
embedding of $\reals$ as the $i$th coordinate axis of $\reals^n$.  Given also
$X \in \gcvx$, define $f_{X, i}\cln [0, \infty) \to \reals$ by
\[
f_{X, i}(\delta)
=
\phi\bigl(X + \nu_i[-\delta/2, \delta/2]\bigr) - \phi(X).
\]
For all $\delta, \delta' \geq 0$, by orthogonal convexity of $X$ and the
valuation property, we have
\[
\phi\bigl(X + \nu_i[-\delta, \delta']\bigr)
=
\phi\bigl(X + \nu_i[-\delta, 0]\bigr) 
+ \phi\bigl(X + \nu_i[0, \delta']\bigr) 
- \phi(X).
\]
So by translation-invariance, $f_{X, i}(\delta + \delta') = f_{X, i}(\delta) +
f_{X, i}(\delta')$.  But $f_{X, i}$ is monotone, so $f_{X, i} (\delta) = f_{X,
i}(1) \cdot \delta$ for all $\delta \geq 0$.  Assume without loss of
generality that $\phi$ is increasing.  Then whenever $S \geq 0$ and $X \in
\gcvxr{S}$, we have
\[
0 
\leq
f_{X, i}(1)
\leq 
\phi\bigl(X + \nu_i[-\hlf, \hlf]\bigr)
\leq
\phi((S + 1)\ucube{n}),
\]
so
\[
0 
\leq 
f_{X, i}(\delta) 
\leq 
\phi((S + 1)\ucube{n}) \cdot \delta 
\]
for all $\delta \geq 0$.   

From this estimate and the fact that $\ucube{n} = \sum_{i = 1}^n \nu_i[-\hlf,
\hlf]$, we deduce that
\[
\phi(X)
\leq
\phi(X + \delta \ucube{n})
\leq
\phi(X) + n \phi((R + 2)\ucube{n}) \cdot \delta
\]
for all $X \in \gcvxr{R}$ and $\delta \in [0, 1]$.  The result follows.
\done
\end{proof}

\begin{thm}     \label{thm:mono-cts-gen}
A monotone translation-invariant valuation on $\gcvx$ is continuous.
\end{thm}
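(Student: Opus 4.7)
The plan is a sandwich argument built on Lemma~\ref{lemma:shrinking-ball}: monotonicity controls $\phi$ from above and below by $\phi$ of a slightly enlarged set, and the lemma shows that the enlargement is negligible. Let $\phi$ be a monotone translation-invariant valuation on $\gcvx$ and suppose $X_r \to X$ in the Hausdorff metric, with $X, X_r \in \gcvx$. We may assume $\phi$ is increasing (else replace $\phi$ by $-\phi$).

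Since all norms on $\reals^n$ are equivalent and $\ball{n} \sub 2\ucube{n}$, Hausdorff convergence supplies $\delta_r \downarrow 0$ with
\[
X \sub X_r + \delta_r \ucube{n}
\qquad\text{and}\qquad
X_r \sub X + \delta_r \ucube{n}
\]
for all $r$. Since $X_r \to X$, there is some $R$ with $X, X_r \in \gcvxr{R}$ for every $r$. Both $X + \delta_r\ucube{n}$ and $X_r + \delta_r\ucube{n}$ lie in $\gcvx$, because $\gcvx$ is closed under Minkowski sum with a compact interval. Monotonicity therefore yields
\[
\phi(X) \leq \phi\bigl(X_r + \delta_r \ucube{n}\bigr)
\qquad\text{and}\qquad
\phi(X_r) \leq \phi\bigl(X + \delta_r \ucube{n}\bigr).
\]

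Apply Lemma~\ref{lemma:shrinking-ball} in two ways. The pointwise version at $X$ gives $\phi(X + \delta_r\ucube{n}) \to \phi(X)$, so the second inequality yields $\limsup_r \phi(X_r) \leq \phi(X)$. The uniform version, applied to the bounded family $(X_r) \sub \gcvxr{R}$, gives $\phi(X_r + \delta_r\ucube{n}) - \phi(X_r) \to 0$; combined with the first inequality this gives $\phi(X) \leq \liminf_r \phi(X_r)$. Hence $\phi(X_r) \to \phi(X)$, establishing continuity.

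The whole argument is short because the real work is already packaged in Lemma~\ref{lemma:shrinking-ball}. The only point requiring attention is that the first of the two inequalities above involves the varying set $X_r$ under $\phi(\cdot + \delta_r\ucube{n})$; this is precisely why we need the uniformity of the lemma on $\gcvxr{R}$, and not merely pointwise convergence $\phi(Y + \delta\ucube{n}) \to \phi(Y)$.
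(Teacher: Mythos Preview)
Your proof is correct and follows essentially the same sandwich argument as the paper, invoking Lemma~\ref{lemma:shrinking-ball} in the same way; the only cosmetic difference is that you phrase the argument sequentially (via $\limsup$/$\liminf$) while the paper gives a direct $\epsln$--$\delta$ version.
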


\begin{proof}
Consider, without loss of generality, an increasing translation-invariant
valuation $\phi$.  Let $X \in \gcvx$ and $\epsln > 0$.
Choose $R \geq 2$ with $X \sub (R - 2)\ucube{n}$.  By
Lemma~\ref{lemma:shrinking-ball}, we may choose $\eta > 0$ such that $\phi(Y +
\eta\ucube{n}) \leq \phi(Y) + \epsln$ for all $Y \in \gcvxr{R}$.  Put
$\delta = \min\{1, \eta/2\}$.

I claim that $|\phi(Y) - \phi(X)| \leq \epsln$ whenever $Y \in \gcvx$ with
$d(X, Y) \leq \delta$.  Indeed, take such a $Y$.  Then $Y \sub X +
\delta\ball{n}$; but also $\ball{n} \sub 2\ucube{n}$, so $Y \sub X +
2\delta\ucube{n}$.  This implies that $Y \sub R\ucube{n}$, so $Y \in
\gcvxr{R}$, and clearly $X \in \gcvxr{R}$ too.  It also implies that
$X \sub Y + \eta\ucube{n}$, giving
\[
\phi(X) \leq \phi(Y + \eta\ucube{n}) \leq \phi(Y) + \epsln.
\]
Similarly, $Y \sub X + \eta\ucube{n}$, so $\phi(Y) \leq \phi(X) + \epsln$.
This proves the claim.  
\done
\end{proof}

\begin{cor}[McMullen~\cite{McMVET}] 
A monotone translation-invariant valuation on $\ecvx{n}$ is continuous.
\done
\end{cor}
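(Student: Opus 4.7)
The plan is to obtain this corollary as an immediate specialization of Theorem \ref{thm:mono-cts-gen} to the case $\gcvx = \ecvx{n}$. The generalization has already carried the full analytic content of the argument, so what remains is just to confirm that $\ecvx{n}$ satisfies the standing hypotheses on $\gcvx$ declared at the start of Section \ref{sec:val}.

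First I would verify orthogonal convexity: the intersection of a convex set with any line is convex, hence an interval, so in particular this holds for axis-parallel lines, and every compact convex set is orthogonally convex. Second, every singleton is trivially a compact convex set, so $\ecvx{n}$ contains all singletons. Third, the Minkowski sum of a compact convex set with a compact interval (itself compact and convex) is compact and convex, so $X + I \in \ecvx{n}$ whenever $X \in \ecvx{n}$ and $I$ is a compact interval.

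The notion of valuation also matches: as the author already notes in the setup, for $\ecvx{n}$ the side-condition $X \cap Y \in \gcvx$ in the inclusion--exclusion identity is automatic, since the intersection of compact convex sets is compact and convex. Hence a monotone translation-invariant valuation on $\ecvx{n}$ in the ordinary sense is exactly a monotone translation-invariant valuation on $\gcvx$ in the generalized sense, and Theorem \ref{thm:mono-cts-gen} applies verbatim to yield continuity. There is no substantive obstacle: the real work happens in Lemma \ref{lemma:shrinking-ball} and Theorem \ref{thm:mono-cts-gen}, and this corollary is a one-line deduction once those are in hand.
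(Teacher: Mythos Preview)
Your proposal is correct and matches the paper's approach exactly: the corollary is an immediate specialization of Theorem~\ref{thm:mono-cts-gen}, and the paper has already noted in the setup of Section~\ref{sec:val} that $\ecvx{n}$ is a valid instance of $\gcvx$. Your explicit verification of the hypotheses is more detailed than the paper's bare ``$\Box$'', but there is no difference in substance.
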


\begin{cor}     \label{cor:mono-cts}
A monotone translation-invariant valuation on $\cvx{n}$ is continuous.
\done
\end{cor}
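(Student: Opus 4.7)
The plan is to apply Theorem \ref{thm:mono-cts-gen} directly with $\gcvx = \cvx{n}$. To do this I only need to verify that the family $\cvx{n}$ of compact $\ell_1$-convex subsets of $\reals^n$ satisfies the standing assumptions that Section \ref{sec:val} places on $\gcvx$: every member is compact and orthogonally convex; every singleton belongs to $\gcvx$; and $X + I \in \gcvx$ whenever $X \in \gcvx$ and $I$ is a compact interval.

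Compactness is part of the definition of $\cvx{n}$. Orthogonal convexity of each member is exactly Corollary \ref{cor:orthoconvex}. Every singleton $\{x\}$ is trivially $\ell_1$-convex, and closure under $X \goesto X + I$ for $I$ a compact interval is Proposition \ref{propn:Min}. This already matches all the hypotheses imposed on $\gcvx$, so Theorem \ref{thm:mono-cts-gen} applies and delivers continuity of any monotone translation-invariant $\phi\cln \cvx{n} \to \reals$.

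One should also check that the valuation identity imposes no hidden requirement: in its definition one asks the equation to hold whenever $X, Y, X \cup Y, X \cap Y$ all lie in $\gcvx$, but for $\cvx{n}$ the condition $X \cap Y \in \cvx{n}$ is automatic from $X, Y, X \cup Y \in \cvx{n}$ by Lemma \ref{lemma:cup-cap} (as the paper has already noted when defining valuations). So there is no gap between the abstract valuation framework of Section \ref{sec:val} and its specialisation to $\cvx{n}$.

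In short, there is no real obstacle here: the corollary is a one-line deduction from Theorem \ref{thm:mono-cts-gen}, with the substantive input supplied by Corollary \ref{cor:orthoconvex}, Proposition \ref{propn:Min}, and Lemma \ref{lemma:cup-cap}.
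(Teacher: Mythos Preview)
Your proposal is correct and matches the paper's approach exactly: the corollary is an immediate specialisation of Theorem~\ref{thm:mono-cts-gen} to $\gcvx = \cvx{n}$, with the standing hypotheses verified via Corollary~\ref{cor:orthoconvex} and Proposition~\ref{propn:Min} (and the $X \cap Y$ redundancy via Lemma~\ref{lemma:cup-cap}, as the paper itself remarks).
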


Lebesgue measure, as a real-valued function on compact subsets of $\reals^n$,
is not continuous with respect to the Hausdorff metric.  It is, however,
continuous when restricted to convex sets \cite[Theorem~12.7]{Val}.
Corollary~\ref{cor:mono-cts} generalizes this classical result to the larger
class of $\ell_1$-convex sets:

\begin{cor}     \label{cor:Vol-cts}
The volume function $\Vol\cln \cvx{n} \to \reals$ is continuous.
\done
\end{cor}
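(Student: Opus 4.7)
The plan is to verify that $\Vol$ satisfies the hypotheses of Corollary~\ref{cor:mono-cts} (the $\ell_1$-convex case of Theorem~\ref{thm:mono-cts-gen}) and then invoke it. This reduces the continuity question to checking three elementary properties: that $\Vol$ is a valuation on $\cvx{n}$, that it is monotone, and that it is translation-invariant.

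First I would note that $\Vol$ is well-defined and finite on $\cvx{n}$: every compact $\ell_1$-convex set is Jordan measurable by the unnumbered proposition preceding this section, and compactness gives finite measure. Translation-invariance of Lebesgue measure is standard, and monotonicity (increasingness) under inclusion is immediate. For the valuation property, $\Vol(\emptyset) = 0$ is trivial, and the identity
\[
\Vol(X \cup Y) = \Vol(X) + \Vol(Y) - \Vol(X \cap Y)
\]
holds for \emph{any} pair of measurable sets of finite measure by standard inclusion-exclusion for Lebesgue measure; so it certainly holds whenever $X, Y, X \cup Y, X \cap Y \in \cvx{n}$ (and indeed $X \cap Y \in \cvx{n}$ is automatic, by Lemma~\ref{lemma:cup-cap}, when the other three are). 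Thus $\Vol$ is a monotone translation-invariant valuation on $\cvx{n}$.

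Applying Corollary~\ref{cor:mono-cts} then yields continuity with respect to the Hausdorff metric induced by the $\ell_1$-norm, which is the assertion of the corollary. There is no real obstacle here: the only point that deserves attention is the appeal to Jordan measurability to ensure $\Vol$ is actually defined as a real-valued function on $\cvx{n}$, since in general Lebesgue measure is badly discontinuous on arbitrary compact sets. Once finiteness and the valuation identity are in hand, the result is a direct consequence of the monotone-continuity theorem proved earlier in the section.
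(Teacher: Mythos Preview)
Your proof is correct and follows exactly the paper's intended argument: the corollary is marked as immediate because $\Vol$ is a monotone translation-invariant valuation on $\cvx{n}$, so Corollary~\ref{cor:mono-cts} applies directly. One small remark: the appeal to Jordan measurability is unnecessary (and the paper explicitly says that proposition is not needed later), since compact sets are already Borel and bounded, hence have finite Lebesgue measure.
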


\section{An analogue of Hadwiger's Theorem}
\label{sec:Had}

We are now in a position to prove $\ell_1$ analogues of the classical theorems
of integral geometry.  We begin with Hadwiger's theorem, adopting a strategy
similar in outline to that of Klain~\cite{Klai}.

Denote by $\hoaff{n}$ the isometry group of $\ell_1^n$.  It is generated by
translations, coordinate permutations, and reflections in coordinate
hyperplanes.  A valuation $\phi$ on $\cvx{n}$ is \demph{invariant} if
$\phi(gX) = \phi(X)$ whenever $X \in \cvx{n}$ and $g \in \hoaff{n}$.  The
continuous invariant valuations on $\cvx{n}$ form a vector space $\Val{n}$
over $\reals$.

Given $0 \leq i \leq n$, write $\Gr{n}{i}$ for the set of $i$-dimensional
coordinate subspaces of $\reals^n$; it has $\binom{n}{i}$ elements.  Define
$\IVdbl{n}{i}\cln \cvx{n} \to \reals$, the $i$th \demph{$\ell_1$-intrinsic
volume} on $\reals^n$, by
\[
\IVdbl{n}{i}(X)
=
\sum_{P \in \Gr{n}{i}}
\Vold{i}(\pi_P X)
\]
($X \in \cvx{n}$).  In the $P$-summand, $\Vold{i}$ denotes Lebesgue measure on
$P \iso \reals^i$.

\begin{examples}        \label{egs:ivs}
\begin{enumerate}
\item The $0$th $\ell_1$-intrinsic volume $\IVdbl{n}{0}$ is the Euler
characteristic $\chi$, given by $\chi(\emptyset) = 0$ and $\chi(X) = 1$
whenever $X \in \cvx{n}$ is nonempty.

\item   \label{eg:ivs-cube}
The unit cube $\ucube{n}$ has $\ell_1$-intrinsic volumes
$\IVdbl{n}{i}(\ucube{n}) = \binom{n}{i}$, which are the same as its
Euclidean intrinsic volumes.

\item Write $\ball{n}$ for the unit ball in $\ell_1^n$.  Then $\Vol(\ball{n})
= 2^n/n!$, giving $\IVdbl{n}{i}(\ball{n}) = \frac{2^i}{i!}\binom{n}{i}$.
Taking $n = 2$ and $i = 1$ shows that the $\ell_1$- and Euclidean intrinsic
volumes of a convex set are not always equal (and nor are they equal
up to a constant factor, by the previous example).
\end{enumerate}
\end{examples}

A valuation $\phi$ on $\cvx{n}$ is \demph{homogeneous of degree $i$} if
$\phi(\lambda X) = \lambda^i\phi(X)$ for all $\lambda \geq 0$ and $X \in
\cvx{n}$.  In the following lemma, we write $\nu\cln \reals^n \to \reals^{n +
1}$ for the embedding that inserts $0$ in the last coordinate.

\begin{lemma}   \label{lemma:iv-iv}
Let $0 \leq i \leq n$.  Then:
\begin{enumerate}
\item   \label{item:iv-civ}
$\IVdbl{n}{i}$ is a continuous invariant valuation on $\cvx{n}$

\item   \label{item:iv-hgs}
$\IVdbl{n}{i}$ is homogeneous of degree $i$

\item   \label{item:iv-i}
$\IVdbl{n}{i}(X) = \IVdbl{n + 1}{i}(\nu X)$ for all $X \in \cvx{n}$

\item   \label{item:iv-v}
$\IVdbl{n}{n} = \Vold{n}$

\item   
$\IVdbl{n}{i}$ is increasing.
\end{enumerate}
\end{lemma}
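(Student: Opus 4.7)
The plan is to dispatch the computational parts (ii)--(v) directly from the definition, and then deduce the valuation, invariance and continuity claims in part~(i) from the properties of projections already established together with Corollary~\ref{cor:mono-cts}.

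For part~(ii) I would use $\pi_P(\lambda X) = \lambda\pi_P X$ and rescale $i$-dimensional Lebesgue measure by $\lambda^i$ in each summand. Part~(iv) is immediate since $\Gr{n}{n} = \{\reals^n\}$ and $\pi_{\reals^n}$ is the identity. For part~(v), if $X \sub Y$ then $\pi_P X \sub \pi_P Y$; each projection is compact $\ell_1$-convex by Corollary~\ref{cor:pjn-pres}, hence Jordan measurable by the proposition of Section~\ref{sec:app}, so $\Vold{i}(\pi_P X) \leq \Vold{i}(\pi_P Y)$, and summing over $P \in \Gr{n}{i}$ yields monotonicity.

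For part~(iii) I would split the $i$-dimensional coordinate subspaces $P'$ of $\reals^{n+1}$ according to whether or not they contain the last coordinate axis $\reals e_{n+1}$. If they do not, then $P' = \nu(P)$ for a unique $P \in \Gr{n}{i}$, and $\pi_{P'}(\nu X) = \nu(\pi_P X)$, contributing $\Vold{i}(\pi_P X)$. If they do, then $\pi_{P'}(\nu X)$ lies in the $(i-1)$-dimensional subspace $P' \cap \nu(\reals^n)$ and so contributes $0$ to $\Vold{i}$. Summing recovers $\IVdbl{n}{i}(X)$, which is what part~(iii) asserts.

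For part~(i), translation-invariance follows because $\pi_P(X + a)$ is a translate of $\pi_P X$ and $\Vold{i}$ is translation-invariant; invariance under coordinate permutations holds because such permutations permute $\Gr{n}{i}$ while preserving $i$-dimensional volume; and invariance under reflections in coordinate hyperplanes holds because each such reflection preserves every $P \in \Gr{n}{i}$ setwise and preserves volume on it. For the valuation property, Corollary~\ref{cor:pjn-cap} gives $\pi_P(X \cup Y) = \pi_P X \cup \pi_P Y$ unconditionally and $\pi_P(X \cap Y) = \pi_P X \cap \pi_P Y$ when $X \cup Y$ is $\ell_1$-convex; each of the four projections lies in $\cvx{i}$ by Corollary~\ref{cor:pjn-pres} and is Jordan measurable, so additivity of $\Vold{i}$ on each coordinate subspace gives the desired identity after summing. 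Continuity then comes for free from the now-established translation-invariance and monotonicity~(v), via Corollary~\ref{cor:mono-cts}. The main obstacle is the valuation step, since it is the only one that invokes the non-trivial Corollary~\ref{cor:pjn-cap} and requires one to verify that all four projections lie in a class where $\Vold{i}$ behaves additively.
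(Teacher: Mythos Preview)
Your proposal is correct and follows essentially the same route as the paper: the valuation property comes from Corollary~\ref{cor:pjn-cap}, and continuity is obtained from monotonicity and translation-invariance via Corollary~\ref{cor:mono-cts}, with parts~(ii)--(v) treated as routine. The only superfluous step is invoking Jordan measurability in part~(v): Lebesgue measure is monotone on all measurable sets, so $\pi_P X \sub \pi_P Y$ already gives $\Vold{i}(\pi_P X) \leq \Vold{i}(\pi_P Y)$ without further justification.
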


\begin{proof}
The only nontrivial part is~\eqref{item:iv-civ}.  First fix $P \in \Gr{n}{i}$.
By Corollary~\ref{cor:pjn-cap}, the function $X \goesto \Vold{i}(\pi_P X)$ on
$\cvx{n}$ is a valuation.  It is also monotone and translation-invariant, and
therefore continuous by Corollary~\ref{cor:mono-cts}.  This holds for all $P$,
and~\eqref{item:iv-civ} follows.
\done
\end{proof}

Parts~\eqref{item:iv-civ} and~\eqref{item:iv-i} allow us to write
$\IVdbl{n}{i}$ as just $\IV{i}$: for if $X \in \cvx{n}$ and $\reals^n$ is
embedded as a coordinate subspace of some larger space $\reals^N$, then the
$i$th $\ell_1$-intrinsic volume of $X$ is the same whether $X$ is regarded as
a subset of $\reals^n$ or of $\reals^N$.  This justifies the word `intrinsic'.

The \demph{dimension} of a nonempty $\ell_1$-convex set $X \sub \reals^n$ is
the smallest $i \in \{0, \ldots, n\}$ such that $X \sub P + q $ for some $P
\in \Gr{n}{i}$ and $q \in \reals^n$.  A valuation $\psi$ on $\cvx{n}$ is
\demph{simple} if $\psi(X) = 0$ whenever $X \in \cvx{n}$ is of dimension less
than $n$.

\begin{propn}   \label{propn:volume}
Let $\psi$ be a simple continuous translation-invariant valuation on
$\cvx{n}$.  Then $\psi = c\Vold{n}$ for some $c \in \reals$.
\end{propn}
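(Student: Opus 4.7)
I would proceed by showing that $\phi := \psi - c\,\Vold{n}$ is identically zero, where $c := \psi(\ucube{n})$. Since $\Vold{n}$ restricted to $\cvx{n}$ is itself a simple, continuous, translation-invariant valuation (using Corollary~\ref{cor:intersection} to keep pairwise intersections in $\cvx{n}$, Corollary~\ref{cor:Vol-cts} for continuity, and the fact that a set of dimension less than $n$ is contained in a translate of a coordinate hyperplane, hence Lebesgue-null), the valuation $\phi$ inherits all these properties and satisfies $\phi(\ucube{n}) = 0$. It suffices to prove $\phi \equiv 0$.

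The first stage is to show $\phi((1/k)\ucube{n}) = 0$ for every positive integer $k$. Partition $\ucube{n}$ into $k$ axis-aligned slabs of width $1/k$ by hyperplanes normal to $e_1$. Every slab, and every partial union of consecutive slabs, is a box, hence an interval and therefore $\ell_1$-convex; adjacent slabs (or groups of slabs) intersect in a hyperplane slice, which is $(n-1)$-dimensional and so killed by simplicity. Iterating the valuation identity gives $\phi(\ucube{n}) = k\,\phi(S)$ for a single slab $S$, and repeating in the remaining $n-1$ coordinate directions gives $\phi(\ucube{n}) = k^n\,\phi((1/k)\ucube{n})$, so $\phi((1/k)\ucube{n}) = 0$; translation-invariance propagates this to every translate.

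The second stage is to show $\phi(X) = 0$ for every $(1/k)$-pixellated $\ell_1$-convex set $X$, by induction on the number $N$ of cubes of $X$. The base case $N = 1$ is the previous stage. For $N > 1$, choose a coordinate $i$ in which the cube-centres of $X$ do not all agree, and translate $X$ along $e_i$ by an appropriate multiple of $1/k$ so that the hyperplane $x_i = 0$ of the cubical grid strictly separates the cube-centres into two nonempty families (the centres are half-integers, so nothing falls on the hyperplane). Lemma~\ref{lemma:divide} supplies $\ell_1$-convex sets $X^+, X^- \in \cvx{n}$, each with strictly fewer than $N$ cubes, whose intersection is $\ell_1$-convex and lies in $\{x_i = 0\}$, hence has dimension $<n$. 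Simplicity kills $\phi(X^+ \cap X^-)$, the inductive hypothesis gives $\phi(X^\pm) = 0$, and the valuation identity yields $\phi(X) = 0$.

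Finally, by Theorem~\ref{thm:dense} the pixellated $\ell_1$-convex sets are Hausdorff-dense in $\cvx{n}$, and $\phi$ is continuous, so $\phi \equiv 0$ on $\cvx{n}$; therefore $\psi = c\,\Vold{n}$. The main obstacle throughout is ensuring that every set appearing in a valuation identity in fact lies in $\cvx{n}$: in stage one this is automatic because all pieces are boxes (intervals), whereas in stage two this is precisely the content of Lemma~\ref{lemma:divide}. Without the lemma, the two pieces obtained by cutting a pixellated $\ell_1$-convex set with a hyperplane could fail to be $\ell_1$-convex, and the valuation identity would not apply.
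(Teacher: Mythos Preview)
Your proof is correct and follows essentially the same route as the paper: define $\phi = \psi - \psi(\ucube{n})\Vold{n}$, show $\phi$ vanishes on small cubes, induct over pixellated $\ell_1$-convex sets via Lemma~\ref{lemma:divide}, and finish by density (Theorem~\ref{thm:dense}) and continuity. The only cosmetic differences are that the paper passes through all rational and then all real $\lambda$ for the cube step (which your restriction to $\lambda = 1/k$ avoids, since those already suffice for density), and that your parenthetical appeal to Corollary~\ref{cor:intersection} is unnecessary---Lebesgue measure satisfies inclusion--exclusion on all measurable sets, so no extra work is needed to see that $\Vold{n}$ is a valuation on $\cvx{n}$.
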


\begin{proof}
Put $\zeta = \psi - \psi(\ucube{n})\Vold{n}$.  By Corollary~\ref{cor:Vol-cts},
$\zeta$ is a simple continuous translation-invariant valuation with
$\zeta(\ucube{n}) = 0$.  We will prove that $\zeta$ is identically zero.

By dividing cubes into smaller cubes, $\zeta(\lambda \ucube{n}) = 0$ for all
rational $\lambda > 0$.  By continuity, $\zeta(\lambda \ucube{n}) = 0$ for all
real $\lambda > 0$.

Now I claim that $\zeta(X) = 0$ whenever $X$ is a pixellated $\ell_1$-convex
set.  Suppose that $X$ is $\lambda$-pixellated, where $\lambda > 0$; thus, $X$
is a union of some finite number $m$ of cubes of the form $\lambda(h +
\ucube{n})$ ($h \in \his{n}$).  If $m = 0$ or $m = 1$ then
$\zeta(X) = 0$ immediately.

Suppose inductively that $m \geq 2$.  Without loss of generality, the $m$
cubes are not all on the same side of the hyperplane $\{ y \in \reals^n \such
y_1 = 0\}$.  Define sets $X^+$ and $X^-$ as in Lemma~\ref{lemma:divide}; thus,
$X = X^+ \cup X^-$, and each of the sets $X^+$, $X^-$, $X^+ \cap X^-$ is
$\ell_1$-convex.  Then $\zeta(X^+) = \zeta(X^-) = 0$ by inductive hypothesis,
and $\zeta(X^+ \cap X^-) = 0$ since $\zeta$ is simple.  Hence $\zeta(X) = 0$
by the valuation property, completing the induction and proving the claim.

The result now follows from Theorem~\ref{thm:dense}.
\done
\end{proof}

\begin{thm}     \label{thm:Had}
The $\ell_1$-intrinsic volumes $\IV{0}, \ldots, \IV{n}$ form a basis for the
vector space $\Val{n}$ of continuous invariant valuations on $\ell_1$-convex
subsets of $\reals^n$.  In particular, $\dim(\Val{n}) = n + 1$.
\end{thm}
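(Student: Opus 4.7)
The plan is to follow the inductive strategy of Klain's proof of the classical Hadwiger theorem, with Proposition~\ref{propn:volume} playing the role of the simple-valuation classification.

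First I would dispatch linear independence. By Examples~\ref{egs:ivs}\eqref{eg:ivs-cube}, $\IV{i}(\lambda \ucube{n}) = \binom{n}{i}\lambda^i$ for all $\lambda > 0$. If $\sum_{i=0}^n a_i \IV{i} = 0$, then evaluating at $\lambda \ucube{n}$ gives the polynomial identity $\sum_{i=0}^n a_i \binom{n}{i} \lambda^i = 0$ for all $\lambda > 0$, forcing every $a_i = 0$.

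For spanning, I would induct on $n$. The base $n = 0$ is immediate: $\cvx{0}$ consists of $\emptyset$ and a single point, so any valuation is a multiple of $\IV{0}$. For the inductive step, fix $\phi \in \Val{n}$ and identify $\cvx{n-1}$ with the $\ell_1$-convex subsets of the coordinate hyperplane $P_0 = \{y \in \reals^n \such y_n = 0\}$. Then $\phi|_{\cvx{n-1}}$ is a continuous valuation on $\cvx{n-1}$, invariant under the stabilizer of $P_0$ in $\hoaff{n}$, which contains a copy of $\hoaff{n-1}$. By the inductive hypothesis, combined with Lemma~\ref{lemma:iv-iv}\eqref{item:iv-i} to identify $\IV{i}$ in the two dimensions, there exist constants $c_0, \ldots, c_{n-1} \in \reals$ with $\phi|_{\cvx{n-1}} = \sum_{i=0}^{n-1} c_i \IV{i}$.

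Now put $\psi = \phi - \sum_{i=0}^{n-1} c_i \IV{i} \in \Val{n}$. I claim $\psi$ is simple. If $X \in \cvx{n}$ has dimension less than $n$, then $X \sub P + q$ for some coordinate hyperplane $P$ and some $q \in \reals^n$; since $\hoaff{n}$ contains all translations and, via coordinate permutations, acts transitively on the coordinate hyperplanes through the origin, there exists $g \in \hoaff{n}$ with $gX \sub P_0$, and invariance plus the construction of $\psi$ give $\psi(X) = \psi(gX) = 0$. Proposition~\ref{propn:volume} then supplies a constant $c_n$ with $\psi = c_n \IV{n}$, finishing the induction.

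The real work has already been absorbed into Proposition~\ref{propn:volume} (itself resting on the pixellated approximation of Theorem~\ref{thm:dense}), so the only small wrinkle in the present argument is the transitivity remark that lets one reduce any lower-dimensional $\ell_1$-convex set to one inside $P_0$; this is where the fact that $\hoaff{n}$, though much smaller than the Euclidean isometry group, is still rich enough to handle all coordinate hyperplanes is used.
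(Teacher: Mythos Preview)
Your proposal is correct and follows essentially the same route as the paper: linear independence via evaluation on cubes, then Klain-style induction on $n$, restricting to a coordinate hyperplane and invoking Proposition~\ref{propn:volume} for the simple remainder. The only cosmetic difference is that the paper proves independence using the triangular system $\IV{i}(\ucube{i}) = 1$, $\IV{j}(\ucube{i}) = 0$ for $j > i$, whereas you use the polynomial identity in $\lambda$ from $\IV{i}(\lambda\ucube{n}) = \binom{n}{i}\lambda^i$; both are equally short.
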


\begin{proof}
By Lemma~\ref{lemma:iv-iv}(\ref{item:iv-i}, \ref{item:iv-v}), we have
$\IV{i}(\ucube{i}) = 1$ whenever $0 \leq i \leq n$, and $\IV{j}(\ucube{i}) =
0$ whenever $0 \leq i < j \leq n$.  It follows that $\IV{0}, \ldots, \IV{n}$
are linearly independent.

We prove by induction on $n$ that $\IV{0}, \ldots, \IV{n}$ span $\Val{n}$.
This is trivial when $n = 0$.  Suppose that $n \geq 1$, and let $\phi \in
\Val{n}$.  Choose some $Q \in \Gr{n}{n - 1}$, and denote by $\phi'$ the
restriction of $\phi$ to $\ell_1$-convex subsets of $Q$.  By inductive
hypothesis, there exist constants $c_0, \ldots, c_{n - 1}$ such that $\phi' =
\sum_{i = 0}^{n - 1} c_i \IV{i}$.  Thus, for all $\ell_1$-convex sets
$X \sub Q$,
\begin{equation}        \label{eq:hyper-comb}
\phi(X)
=
\sum_{i = 0}^{n - 1} c_i \IV{i}(X).
\end{equation}
Since $\phi$ is invariant under translations and coordinate permutations,
\eqref{eq:hyper-comb} holds for all $X \in \cvx{n}$ of dimension less than
$n$.  Put $\psi = \phi - \sum_{i = 0}^{n - 1} c_i \IV{i}$.  Then $\psi$ is a
continuous translation-invariant valuation, which we have just shown to be
simple.  Proposition~\ref{propn:volume} implies that $\psi = c_n \Vold{n}$ for
some $c_n \in \reals$.  Since $\Vold{n} = \IV{n}$, the result follows.  
\done
\end{proof}

\begin{cor}     \label{cor:hgs}
Let $\phi$ be a continuous invariant valuation on $\cvx{n}$, homogeneous of
degree $i \in \reals$.  Then $i \in \{0, \ldots, n\}$ and $\phi = c \IV{i}$
for some $c \in \reals$.  
\done
\end{cor}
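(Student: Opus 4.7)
The strategy is to reduce immediately to the Hadwiger classification (Theorem~\ref{thm:Had}) and then use homogeneity to pick out a single basis element. First I would invoke Theorem~\ref{thm:Had} to write $\phi$ uniquely as
\[
\phi = \sum_{j = 0}^{n} c_j \IV{j}
\]
with $c_0, \ldots, c_n \in \reals$. By Lemma~\ref{lemma:iv-iv}(\ref{item:iv-hgs}), each $\IV{j}$ is homogeneous of degree $j$, so for every $\lambda \geq 0$ and every $X \in \cvx{n}$,
\[
\phi(\lambda X) = \sum_{j = 0}^{n} c_j \lambda^j \IV{j}(X).
\]
On the other hand, the hypothesis that $\phi$ is homogeneous of degree $i$ gives
\(
\phi(\lambda X) = \lambda^i \phi(X) = \sum_{j = 0}^{n} c_j \lambda^i \IV{j}(X),
\)
so subtracting yields
\[
\sum_{j = 0}^{n} c_j (\lambda^j - \lambda^i) \IV{j}(X) = 0
\]
for all $\lambda \geq 0$ and all $X \in \cvx{n}$.

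Next I would exploit the linear independence of $\IV{0}, \ldots, \IV{n}$ established in the proof of Theorem~\ref{thm:Had}: fixing $\lambda$ and viewing the display as an identity of valuations in $X$, each coefficient must vanish, so $c_j (\lambda^j - \lambda^i) = 0$ for every $j \in \{0, \ldots, n\}$ and every $\lambda \geq 0$. Thus if $c_j \neq 0$ for some $j$, then $\lambda^j = \lambda^i$ for all $\lambda \geq 0$; taking $\lambda = 2$ forces $i = j$. Consequently at most one $c_j$ can be nonzero, and that $j$ must equal $i$. If $\phi \not\equiv 0$ then at least one $c_j$ is nonzero, so $i$ lies in $\{0, \ldots, n\}$ and $\phi = c_i \IV{i}$, while the case $\phi \equiv 0$ is handled by taking $c = 0$.

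I do not expect any genuine obstacle here: once the Hadwiger-type theorem and the per-degree homogeneity of the $\IV{j}$ are in hand, the statement is a short matching-of-powers argument. All the substantive work is concentrated in Theorem~\ref{thm:Had}, and in particular in the simple-valuation identification of Proposition~\ref{propn:volume} that feeds into it.
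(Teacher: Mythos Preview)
Your argument is correct and is exactly the intended expansion of the paper's \done: invoke Theorem~\ref{thm:Had}, use the degree-$j$ homogeneity of each $\IV{j}$ from Lemma~\ref{lemma:iv-iv}(\ref{item:iv-hgs}), and compare powers of $\lambda$ via the linear independence of the $\IV{j}$. The only cosmetic point is that the conclusion $i \in \{0,\ldots,n\}$ tacitly assumes $\phi \not\equiv 0$, which you note; this is the standard reading of the corollary.
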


Careful analysis of the proof of the theorem enables two refinements to be
made.  

First, when showing that every continuous invariant valuation $\phi$ on
$\cvx{n}$ was a linear combination of $\ell_1$-intrinsic volumes, we never
called on the fact that $\phi$ was invariant under reflections in coordinate
hyperplanes.  Second, we did not use the full strength of the assumption that
$\phi$ was continuous: only that $\phi$ was \demph{continuous from the
outside}, that is, $\displaystyle\lim_{Y \to X,\ Y \supseteq X} \phi(Y) =
\phi(X)$ for all $X \in \cvx{n}$.  (The essential point is that in
Proposition~\ref{propn:lc-outer}, the pixellated sets $\otr{X}{\lambda}$
contain $X$.)  But any linear combination of $\ell_1$-intrinsic volumes is
continuous and invariant under the full isometry group.  Hence:

\begin{cor}   \label{cor:improvement}
Let $\phi$ be a valuation on $\cvx{n}$, continuous from the outside and
invariant under translations and coordinate permutations.  Then $\phi$ is
continuous and invariant.  
\done
\end{cor}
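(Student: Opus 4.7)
The plan is to revisit the proof of Theorem~\ref{thm:Had}, together with its key input Proposition~\ref{propn:volume}, and verify that the argument still yields $\phi = \sum_{i=0}^n c_i \IV{i}$ under the weakened hypotheses on $\phi$. Once this representation is established the conclusion is immediate: by Lemma~\ref{lemma:iv-iv} each $\IV{i}$ is continuous and invariant under the full isometry group $\hoaff{n}$, and these properties pass to linear combinations.

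I would first check that Proposition~\ref{propn:volume} holds when its continuity hypothesis is weakened to continuity from the outside. Its proof uses continuity in exactly two places: passing from $\zeta(\lambda \ucube{n}) = 0$ for rational $\lambda$ to the same for all real $\lambda$, and passing from pixellated $\ell_1$-convex sets to arbitrary compact $\ell_1$-convex sets via Theorem~\ref{thm:dense}. Both approximations can be arranged from above: rational $\lambda_k \downarrow \lambda$ give $\lambda_k \ucube{n} \supseteq \lambda \ucube{n}$, and the outer pixellations $\otr{X}{\lambda}$ of Proposition~\ref{propn:lc-outer} satisfy $\otr{X}{\lambda} \supseteq X$ with Hausdorff distance tending to zero as $\lambda \to 0$. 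So continuity from the outside suffices.

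Next I would rerun the induction of Theorem~\ref{thm:Had} with the inductive hypothesis reformulated as the conclusion of the present corollary. Restricting $\phi$ to $\ell_1$-convex subsets of a coordinate subspace $Q \in \Gr{n}{n-1}$ preserves continuity from the outside as well as invariance under translations and coordinate permutations of $Q$, so the inductive hypothesis yields constants $c_0, \ldots, c_{n-1}$ with $\phi' = \sum_{i=0}^{n-1} c_i \IV{i}$ on $Q$. To extend this identity to every $X \in \cvx{n}$ of dimension less than $n$, one carries $X$ into $Q$ by a translation followed by a coordinate permutation; no reflection in a coordinate hyperplane is required. The residual valuation $\psi = \phi - \sum_{i=0}^{n-1} c_i \IV{i}$ is then simple, continuous from the outside, and translation-invariant, so the strengthened Proposition~\ref{propn:volume} gives $\psi = c_n \Vold{n}$, completing the representation $\phi = \sum_{i=0}^n c_i \IV{i}$.

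The main obstacle is really just a careful bookkeeping exercise: confirming that no step in the development leading to Theorem~\ref{thm:Had} secretly uses two-sided continuity or reflection invariance. The crucial observation that makes this audit succeed is the one already flagged in the paragraph preceding the corollary, namely that the pixellated approximations $\otr{X}{\lambda}$ sit above $X$ rather than around it, so that the Hadwiger proof is naturally a one-sided argument.
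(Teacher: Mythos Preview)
Your proposal is correct and follows essentially the same approach as the paper: the paper's proof consists precisely of the remark that the arguments for Proposition~\ref{propn:volume} and Theorem~\ref{thm:Had} only use continuity from the outside (because the approximating pixellated sets $\otr{X}{\lambda}$ contain $X$) and never use reflection invariance, so $\phi$ is a linear combination of the $\IV{i}$ and hence continuous and fully invariant. You have simply spelled out this audit in more detail.
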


\section{An analogue of Steiner's formula}
\label{sec:Ste}

The most obvious analogue of the classical Steiner formula would be an identity
of the form
\[
\Vol(X + \lambda \ball{n}) 
=
\sum_{i = 0}^n c_i \IV{i}(X) \lambda^{n - i}
\]
for $X \in \cvx{n}$ and $\lambda \geq 0$.  Here $\ball{n}$ denotes the closed
unit ball in $\ell_1^n$, and $c_0, \ldots, c_n$ are constants.  However, there
can be no such formula.  For if there were then $\Vol(\cblank + \ball{n})$
would be a valuation on $\cvx{n}$, and the following example demonstrates that
for general $n$, it is not.

\begin{example} \label{eg:ball-Steiner}
Let $X = [0, 1] \times \{0\}$ and $Y = \{0\} \times [0, 1]$, both subsets of
$\reals^2$.  Then $X$, $Y$, $X \cap Y$ and $X \cup Y$ are all $\ell_1$-convex
sets, and it is straightforward to calculate that
\[
\Vol((X \cap Y) + \ball{2})
<
\Vol((X + \ball{2}) \cap (Y + \ball{2})).
\]
But $(X \cup Y) + \ball{2} = (X + \ball{2}) \cup (Y + \ball{2})$ by
Proposition~\ref{propn:Min-cap}, so $\Vol(\cblank + \ball{2})$ is not a
valuation on $\cvx{2}$.
\end{example}

There is, however, a Steiner-type formula in which the role of the ball is
played by the cube $\ucube{n}$.

\begin{thm}
Let $X \in \cvx{n}$ and $\lambda \geq 0$.  Then for $0 \leq k \leq n$,
\[
\IV{k}(X + \lambda \ucube{n})
=
\sum_{i = 0}^k \binom{n - i}{n - k} \IV{i}(X) \lambda^{k - i}.
\]
In particular, 
\[
\Vol(X + \lambda \ucube{n})
=
\sum_{i = 0}^n \IV{i}(X) \lambda^{n - i}.
\]
\end{thm}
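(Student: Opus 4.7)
The plan is to show that, for each fixed $\lambda \geq 0$ and $0 \leq k \leq n$, the map $\phi_\lambda \cln X \goesto \IV{k}(X + \lambda \ucube{n})$ is a continuous invariant valuation on $\cvx{n}$, and then invoke the $\ell_1$ Hadwiger theorem. The valuation step is the substantive one. If $X, Y \in \cvx{n}$ with $X \cup Y \in \cvx{n}$, then $X + \lambda \ucube{n}$ and $Y + \lambda \ucube{n}$ are $\ell_1$-convex by Proposition~\ref{propn:Min}, $(X \cup Y) + \lambda \ucube{n}$ equals $(X + \lambda \ucube{n}) \cup (Y + \lambda \ucube{n})$ trivially, and the intersection analogue $(X \cap Y) + \lambda \ucube{n} = (X + \lambda \ucube{n}) \cap (Y + \lambda \ucube{n})$ is precisely Proposition~\ref{propn:Min-cap} (here is where the interval hypothesis bites—Example~\ref{eg:ball-Steiner} shows this fails for the ball). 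The valuation axiom for $\phi_\lambda$ is then inherited from that of $\IV{k}$. Continuity follows because Minkowski addition by a fixed compact set is Hausdorff-continuous and $\IV{k}$ is continuous; invariance follows because $\ucube{n}$ is fixed by every coordinate permutation and reflection.

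By Theorem~\ref{thm:Had}, then, $\phi_\lambda = \sum_{i = 0}^{n} c_i(\lambda) \IV{i}$ for some real coefficients $c_i(\lambda)$. To determine the $\lambda$-dependence I would exploit the degree-$k$ homogeneity of $\IV{k}$: for $t > 0$,
\[
\phi_\lambda(tX)
= \IV{k}\bigl(t(X + (\lambda/t)\ucube{n})\bigr)
= t^k \phi_{\lambda/t}(X).
\]
Expanding both sides in the $\IV{i}$ basis and using the degree-$i$ homogeneity of $\IV{i}$ together with the linear independence of $\IV{0}, \ldots, \IV{n}$, we get $c_i(\lambda) t^i = t^k c_i(\lambda/t)$ for all $t, \lambda > 0$, forcing $c_i(\lambda) = c_i \lambda^{k - i}$ for some constant $c_i$. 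Since $\phi_\lambda(X) \to \IV{k}(X)$ as $\lambda \to 0$ by continuity, the negative-power terms must vanish, i.e.\ $c_i = 0$ for $i > k$.

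To identify the remaining constants, I would evaluate at $X = s\ucube{n}$ for $s \geq 0$. On one hand, $\phi_\lambda(s \ucube{n}) = \IV{k}((s + \lambda)\ucube{n}) = \binom{n}{k}(s + \lambda)^k$ by Examples~\ref{egs:ivs}(\ref{eg:ivs-cube}); on the other hand, $\sum_{i = 0}^{k} c_i \lambda^{k - i} \binom{n}{i} s^i$. Comparing coefficients of $s^i \lambda^{k - i}$ via the binomial theorem gives $c_i \binom{n}{i} = \binom{n}{k}\binom{k}{i}$, which rearranges to $c_i = \binom{n - i}{n - k}$, yielding the claimed formula. The second assertion is the case $k = n$, using $\IV{n} = \Vold{n}$ (Lemma~\ref{lemma:iv-iv}(\ref{item:iv-v})) and $\binom{n - i}{0} = 1$.

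The main obstacle, and the reason this argument works for $\ucube{n}$ but not for $\ball{n}$, is the valuation step—specifically the intersection identity from Proposition~\ref{propn:Min-cap}, which needs the dilating set to be an interval. Everything after that is essentially the standard Hadwiger-plus-homogeneity bookkeeping familiar from the Euclidean Steiner formula, with flag coefficients replaced by binomial ones.
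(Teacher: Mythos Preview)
Your proof is correct and follows essentially the same strategy as the paper's: establish that $X \mapsto \IV{k}(X + \lambda\ucube{n})$ is a continuous invariant valuation via Proposition~\ref{propn:Min-cap}, apply the $\ell_1$ Hadwiger theorem, use homogeneity to extract the $\lambda$-dependence, and identify the coefficients by evaluating on cubes. The only cosmetic differences are that the paper applies Hadwiger once at $\lambda = 1$ and then rescales (rather than carrying a $\lambda$-dependent coefficient $c_i(\lambda)$), and obtains continuity from monotonicity and Corollary~\ref{cor:mono-cts} rather than from Hausdorff-continuity of Minkowski addition.
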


For the left-hand side of the first equation to be defined we need $X +
\lambda\ucube{n}$ to be $\ell_1$-convex.  This follows from
Proposition~\ref{propn:Min}.

\begin{proof}
We begin by showing that $\IV{k}(\cblank + \ucube{n})$ is a continuous
invariant valuation.  Proposition~\ref{propn:Min-cap} implies that it is a
valuation, since $\ucube{n}$ is an interval.  It is invariant, since
$\ucube{n}$ is invariant under isometries fixing the origin.  It is also
monotone, and therefore continuous by Corollary~\ref{cor:mono-cts}.

By the $\ell_1$ Hadwiger theorem~(\ref{thm:Had}), there are constants $c_i$
such that $\IV{k}(X + \ucube{n}) = \sum_{i = 0}^n c_i \IV{i}(X)$ for all $X
\in \cvx{n}$.  It follows that for $\lambda > 0$ and $X \in \cvx{n}$,
\[
\IV{k}(X + \lambda \ucube{n})
=
\lambda^k \IV{k} (\lambda^{-1} X + \ucube{n})
=
\sum_{i = 0}^n c_i \IV{i}(X) \lambda^{k - i}.
\]
The result follows on putting $X = \ucube{n}$, using
Example~\ref{egs:ivs}(\ref{eg:ivs-cube}). 
\done
\end{proof}

\section{An analogue of Crofton's formula}
\label{sec:Cro}

In this section and the next, we derive $\ell_1$ analogues of Euclidean
integral-geometric formulas.  The formal structure is similar to that in
Klain and Rota~\cite{KlRo}.

For $0 \leq k \leq n$, let $\Graff{n}{k}$ denote the set of $k$-dimensional
affine subspaces of $\reals^n$ parallel to some $k$-dimensional coordinate
subspace.  Each element of $\Graff{n}{k}$ is uniquely representable as $P +
q$ with $P \in \Gr{n}{k}$ and $q \in P^\perp$.

There is a natural measure on $\Graff{n}{k}$, invariant under isometries of
$\ell_1^n$.  Indeed, $\Graff{n}{k}$ is in canonical bijection with the
disjoint union $\coprod_{P \in \Gr{n}{k}} P^\perp$, each space $P^\perp$
carries Lebesgue measure $\Vol_{n - k}$, and summing gives the measure on
$\Graff{n}{k}$.

\begin{thm}     \label{thm:Cro}
Let $X \in \cvx{n}$.  Then for $0 \leq j \leq k \leq n$,
\begin{equation}        \label{eq:Crofton}
\int_{\Graff{n}{k}}
\IV{j}(X \cap A) \dee A
=
\binom{n + j - k}{j}
\IV{n + j - k}(X).
\end{equation}
In particular, for $0 \leq k \leq n$, the set 
\[
\bigl\{
A \in \Graff{n}{k} \such X \cap A \neq \emptyset
\bigr\}
\]
has measure $\IV{n - k}(X)$.
\end{thm}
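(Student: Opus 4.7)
The plan is to follow the Klain--Rota template, exploiting the $\ell_1$ Hadwiger theorem to reduce everything to identifying a single constant. Let $\phi_j\cln \cvx{n} \to \reals$ be the left-hand side of~\eqref{eq:Crofton}. First I would check that $\phi_j$ is well-defined: for $X$ compact, only affine subspaces $A \in \Graff{n}{k}$ whose translation part lies in a bounded region of $P^\perp$ meet $X$, and for each such $A$, the set $X \cap A$ is compact and $\ell_1$-convex by Corollary~\ref{cor:intersection} (since $A$ is an interval in $\reals^n$), so $\IV{j}(X \cap A)$ is defined and bounded by $\IV{j}(X)$.

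Next I would show that $\phi_j$ is a continuous invariant valuation. Invariance follows from the fact that the measure on $\Graff{n}{k}$ is invariant under $\hoaff{n}$. For the valuation property, observe that $(X \cup Y) \cap A = (X \cap A) \cup (Y \cap A)$ and $(X \cap Y) \cap A = (X \cap A) \cap (Y \cap A)$; whenever $X, Y, X \cup Y \in \cvx{n}$ (so that $X \cap Y \in \cvx{n}$ too, by Lemma~\ref{lemma:cup-cap}), all four intersections with $A$ are $\ell_1$-convex, so the valuation identity for $\IV{j}$ integrates to the valuation identity for $\phi_j$. Since $\phi_j$ is clearly monotone and translation-invariant, Corollary~\ref{cor:mono-cts} gives continuity.

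The next ingredient is homogeneity: a change of variable $A \mapsto \lambda^{-1} A$ in the integral (the measure on $\Graff{n}{k}$ is $(n-k)$-dimensional Lebesgue on each $P^\perp$, so it scales by $\lambda^{n-k}$), combined with the degree-$j$ homogeneity of $\IV{j}$, shows that $\phi_j$ is homogeneous of degree $n+j-k$. Corollary~\ref{cor:hgs} therefore yields a constant $c$ with $\phi_j = c \IV{n+j-k}$. To pin down $c$, evaluate both sides on $\ucube{n}$: for each $P \in \Gr{n}{k}$, the set of $q \in P^\perp$ with $\ucube{n} \cap (P+q) \neq \emptyset$ has $\Vol_{n-k}$-measure $1$, and on this set $\ucube{n} \cap (P+q)$ is a translate of the $k$-dimensional unit cube in $P$, with $\IV{j}(\ucube{n} \cap (P+q)) = \binom{k}{j}$ by Example~\ref{egs:ivs}(\ref{eg:ivs-cube}). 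Hence $\phi_j(\ucube{n}) = \binom{n}{k}\binom{k}{j}$, while $\IV{n+j-k}(\ucube{n}) = \binom{n}{k-j}$, and a direct manipulation of factorials gives $c = \binom{n+j-k}{j}$, as required.

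Finally, the ``in particular'' statement follows by taking $j = 0$, since $\IV{0} = \chi$ satisfies $\IV{0}(X \cap A) = \mathbbm{1}[X \cap A \neq \emptyset]$ and $\binom{n-k}{0} = 1$. The only real obstacle I anticipate is the bookkeeping for the valuation property of $\phi_j$---namely verifying that all the sets that appear (in particular $(X \cup Y) \cap A$ and $(X \cap Y) \cap A$) are $\ell_1$-convex so that $\IV{j}$ can legitimately be applied, and that the integrand is measurable---but both of these follow routinely from Corollary~\ref{cor:intersection}, Lemma~\ref{lemma:cup-cap}, and the continuity of $\phi_j$ pointwise in~$A$.
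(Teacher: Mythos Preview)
Your proposal is correct and follows essentially the same route as the paper: show the left-hand side defines a monotone invariant valuation (hence continuous by Corollary~\ref{cor:mono-cts}), establish homogeneity of degree $n+j-k$, apply Corollary~\ref{cor:hgs}, and evaluate on $\ucube{n}$ to determine the constant. Your treatment is in fact more explicit than the paper's on well-definedness and on why the valuation identity passes through the integral.
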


\begin{proof}
We prove just the first statement, the second being the case $j = 0$.     

Write $\phi(X)$ for the left-hand side of~\eqref{eq:Crofton}.  Then $\phi$ is
a monotone invariant valuation since $\IV{j}$ is, and since the measure on
$\Graff{n}{k}$ is invariant.  It is therefore continuous, by
Corollary~\ref{cor:mono-cts}.  Moreover, $\phi$ is homogenous of degree $n + j
- k$, since for $\lambda > 0$ and $X \in \cvx{n}$,
\[
\phi(\lambda X)    
=
\int_{B \in \Graff{n}{k}} 
\IV{j}(\lambda X \cap \lambda B) 
\dee (\lambda B) 
=
\lambda^{j}\lambda^{n - k} \phi(X).
\]
So by Corollary~\ref{cor:hgs}, $\phi = c\IV{n + j -k}$ for some $c \in
\reals$.

By construction of the invariant measure, 
\[
\phi(\ucube{n})
=
\sum_{P \in \Gr{n}{k}}
\int_{P^\perp} 
\IV{j}(\ucube{n} \cap (P + q)) \dee q.
\]
For $P \in \Gr{n}{k}$ and $q \in P^\perp$, identifying $P^\perp$ with
$\reals^{n - k}$, we have
\[
\IV{j}(\ucube{n} \cap (P + q))
=
\begin{cases}
\IV{j}(\ucube{k}) = \binom{k}{j}&\text{if }q \in \ucube{n - k}      \\
0                               &\text{otherwise.}
\end{cases}
\]
From this it is straightforward to deduce the value of $c$.
\done
\end{proof}

A very similar argument, left to the reader, proves the following analogue of
Kubota's theorem~\cite{Kubo,KlRo}.  It can also be deduced directly from the
definition of the $\ell_1$-intrinsic volumes.  Corollary~\ref{cor:pjn-pres}
guarantees that the left-hand side is defined.

\begin{thm}
Let $X \in \cvx{n}$.  Then for $0 \leq j \leq k \leq n$,
\[
\sum_{P \in \Gr{n}{k}}
\IV{j}(\pi_P X)
=
\binom{n - j}{n - k} \IV{j}(X).
\]
\ \done
\end{thm}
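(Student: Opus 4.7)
The plan is to prove the Kubota-type formula by the direct route suggested in the paper, unpacking the definition of the $\ell_1$-intrinsic volumes and using a counting argument on coordinate subspaces. Writing $\Gr{n}{j}$ for the $j$-dimensional coordinate subspaces of $\reals^n$, by definition
\[
\sum_{P \in \Gr{n}{k}} \IV{j}(\pi_P X)
=
\sum_{P \in \Gr{n}{k}} \sum_{Q \in \Gr{k}{j}(P)} \Vold{j}(\pi_Q(\pi_P X)),
\]
where $\Gr{k}{j}(P)$ means the $j$-dimensional coordinate subspaces of $P$ (with respect to the basis vectors spanning $P$). Crucially, every such $Q$ is itself a $j$-dimensional coordinate subspace of $\reals^n$ contained in $P$, and for $Q \sub P$ the orthogonal projections satisfy $\pi_Q \circ \pi_P = \pi_Q$ on $\reals^n$. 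So the double sum rewrites as
\[
\sum_{P \in \Gr{n}{k}}\ \sum_{\substack{Q \in \Gr{n}{j}\\ Q \sub P}} \Vold{j}(\pi_Q X).
\]

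The second step is to swap the order of summation. For each fixed $Q \in \Gr{n}{j}$, the number of $k$-dimensional coordinate subspaces containing $Q$ is the number of ways to choose the remaining $k - j$ basis vectors from the $n - j$ that are not already used, namely $\binom{n - j}{k - j} = \binom{n - j}{n - k}$. Hence
\[
\sum_{P \in \Gr{n}{k}} \IV{j}(\pi_P X)
=
\binom{n - j}{n - k}\sum_{Q \in \Gr{n}{j}} \Vold{j}(\pi_Q X)
=
\binom{n - j}{n - k}\IV{j}(X),
\]
as desired. There does not seem to be any real obstacle: the only points requiring care are the identity $\pi_Q \circ \pi_P = \pi_Q$ for coordinate subspaces $Q \sub P$ (which holds precisely because the projections are with respect to the orthonormal standard basis), and the well-definedness of $\IV{j}(\pi_P X)$, guaranteed by Corollary~\ref{cor:pjn-pres}.

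As an alternative, one could mirror the proof of Theorem~\ref{thm:Cro}: let $\phi(X) = \sum_{P \in \Gr{n}{k}} \IV{j}(\pi_P X)$, observe that Corollary~\ref{cor:pjn-cap} makes $\phi$ a valuation, that invariance under the isometry group $\hoaff{n}$ is immediate (coordinate permutations permute $\Gr{n}{k}$ and $\IV{j}$ is invariant), that monotonicity plus Corollary~\ref{cor:mono-cts} gives continuity, and that $\phi$ is homogeneous of degree $j$. Then Corollary~\ref{cor:hgs} yields $\phi = c\IV{j}$, and evaluating at $X = \ucube{n}$, using $\pi_P \ucube{n} = \ucube{k}$ and Examples~\ref{egs:ivs}\eqref{eg:ivs-cube}, forces $c\binom{n}{j} = \binom{n}{k}\binom{k}{j}$, i.e.\ $c = \binom{n - j}{n - k}$. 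The direct argument above is shorter and makes the binomial factor transparent, so I would present it as the proof. \done
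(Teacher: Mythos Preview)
Your proposal is correct, and it covers precisely the two routes the paper itself indicates: the paper says the result ``can also be deduced directly from the definition of the $\ell_1$-intrinsic volumes'' (your main argument via $\pi_Q\circ\pi_P=\pi_Q$ and the count $\binom{n-j}{k-j}$) and that ``a very similar argument'' to the Crofton proof works (your alternative via Corollary~\ref{cor:hgs} and evaluation at $\ucube{n}$). Both are exactly what the paper had in mind.
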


\section{Analogues of the kinematic formulas}
\label{sec:kin}

The classical kinematic formulas concern the intrinsic volumes of sets $gX
\cap Y$, where $X$ and $Y$ are convex and $g$ is a Euclidean motion.
Remark~\ref{rmk:cogeodesic} suggests that fundamentally, one of $X$ and $Y$
should be regarded as geodesic and the other as \emph{co}geodesic, although in
the classical context the difference is invisible.  This leads us to expect
$\ell_1$ kinematic formulas in which $X$ is $\ell_1$-convex and $Y$ is an
interval.

To state the $\ell_1$ kinematic formulas, we first need a measure on the
isometry group $\hoaff{n}$ of $\ell_1^n$.  This is constructed as follows.
$\hoaff{n}$ has a subgroup $\ho{n}$, the $n$th hyperoctahedral group,
consisting of just the isometries fixing the origin.  Each element of
$\hoaff{n}$ is uniquely representable as $x \goesto h(x) + q$ with $h \in
\ho{n}$ and $q \in \reals^n$.  Being a finite group, $\ho{n}$ has a unique
invariant probability measure.  Taking the product of this measure with
Lebesgue measure on $\reals^n$ gives an invariant (Haar) measure on
$\hoaff{n}$.

We also need a result on products.  First observe that if $X \sub \reals^m$
and $Y \sub \reals^n$ are $\ell_1$-convex then $X \times Y$, viewed as a
subset of $\reals^{m + n}$, is also $\ell_1$-convex.

\begin{propn}   \label{propn:products}
Let $X \in \cvx{m}$, $Y \in \cvx{n}$, and $0 \leq k \leq m + n$.  Then 
\[
\IV{k}(X \times Y)
=
\sum_{i + j = k} \IV{i}(X) \IV{j}(Y)
\]
where $0 \leq i \leq m$ and $0 \leq j \leq n$ in the summation.
\end{propn}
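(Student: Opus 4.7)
The plan is to prove this directly from the definition of $\IV{k}$, exploiting the fact that coordinate subspaces of $\reals^{m+n}$ factor cleanly as products of coordinate subspaces of the two factors. No Hadwiger-style characterization argument is needed.

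First I would set up the bijection between coordinate subspaces. Identify $\reals^{m+n}$ with $\reals^m \times \reals^n$ via the usual splitting of the standard basis into its first $m$ and last $n$ vectors. Any coordinate subspace $P \in \Gr{m+n}{k}$ is then uniquely determined by the pair $(P_1, P_2)$ where $P_1 = P \cap \reals^m \in \Gr{m}{i}$ and $P_2 = P \cap \reals^n \in \Gr{n}{j}$, with $i + j = k$. Conversely, any such pair $(P_1, P_2)$ gives $P = P_1 \oplus P_2 \in \Gr{m+n}{k}$. This yields a bijection
\[
\Gr{m+n}{k} \;\longleftrightarrow\; \coprod_{i+j=k} \Gr{m}{i} \times \Gr{n}{j}.
\]

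Next I would observe that under this splitting, the coordinate projection factors: $\pi_P(X \times Y) = \pi_{P_1}(X) \times \pi_{P_2}(Y)$, since the projection onto $P = P_1 \oplus P_2$ acts coordinatewise. Then by Fubini applied to Lebesgue measure on $P \cong \reals^i \times \reals^j$,
\[
\Vold{k}\bigl(\pi_P(X \times Y)\bigr) = \Vold{i}(\pi_{P_1} X) \cdot \Vold{j}(\pi_{P_2} Y).
\]
(Both factors are bounded and measurable, in fact $\ell_1$-convex by Corollary~\ref{cor:pjn-pres}.) Summing over $P \in \Gr{m+n}{k}$ using the bijection above and regrouping gives
\[
\IV{k}(X \times Y) = \sum_{i+j=k} \Biggl( \sum_{P_1 \in \Gr{m}{i}} \Vold{i}(\pi_{P_1} X) \Biggr) \Biggl( \sum_{P_2 \in \Gr{n}{j}} \Vold{j}(\pi_{P_2} Y) \Biggr) = \sum_{i+j=k} \IV{i}(X) \IV{j}(Y),
\]
as desired.

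There is essentially no obstacle: the whole argument is bookkeeping once one notices that the isometry group $\hoaff{n}$ is small enough that the definition of $\IV{k}$ is already a finite sum over coordinate data that splits multiplicatively across a product. The one subtlety worth flagging is that $X \times Y$ is genuinely $\ell_1$-convex (so the left-hand side is defined), which is immediate from the monotone-path characterization in Proposition~\ref{propn:lc-eqv}(\ref{part:lc-eqv-mono}): a monotone path in $X \times Y$ is just a pair of monotone paths in $X$ and $Y$ reparametrized compatibly, and these exist by $\ell_1$-convexity of the factors.
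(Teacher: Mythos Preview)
Your proof is correct and is essentially the same as the paper's: both argue directly from the definition of $\IV{k}$ via the bijection $\Gr{m+n}{k} \leftrightarrow \coprod_{i+j=k} \Gr{m}{i} \times \Gr{n}{j}$, together with $\pi_{P_1 \oplus P_2}(X \times Y) = \pi_{P_1}X \times \pi_{P_2}Y$ and multiplicativity of Lebesgue measure. The paper compresses this into a two-line display, but the content is identical.
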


\begin{proof}
By definition of the $\ell_1$-intrinsic volumes, 
\begin{align*}
\IV{k}(X \times Y)      &
=
\sum_{i + j = k}
\sum_{\ P \in \Gr{m}{i},\: Q \in \Gr{n}{j}} 
\Vol_{i + j}(\pi_{P \times Q}(X \times Y))      \\
&=
\sum_{i + j = k}
\sum_{\ P \in \Gr{m}{i},\: Q \in \Gr{n}{j}} 
\Vol_i(\pi_P X) \Vol_j(\pi_Q Y)
=
\sum_{i + j = k} \IV{i}(X) \IV{j}(Y).
\makebox[0em]{\hspace*{1.25em}\ensuremath{\Box}}
\end{align*}
\end{proof}

\begin{example} \label{eg:intvl-esp}
Let $I = I_1 \times \cdots \times I_n$ be a nonempty compact interval.  Then
$\IV{j}(I)$ is the $j$th elementary symmetric polynomial in the lengths of
$I_1, \ldots, I_n$ (also equal to the $j$th Euclidean intrinsic volume of
$I$).  
\end{example}

We now state the principal kinematic formula for $\ell_1^n$.

\begin{thm}     \label{thm:prin-kin}
Let $X \in \cvx{n}$ and let $I$ be a compact interval in $\reals^n$.  Then the
set
\begin{equation}        \label{eq:kine-set}
\bigl\{
g \in \hoaff{n}
\such
gX \cap I \neq \emptyset
\bigr\}
\end{equation}
has measure
\[
\sum_{i + j = n}
\binom{n}{i}^{-1} \IV{i}(X) \IV{j}(I).
\]
\end{thm}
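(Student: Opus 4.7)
The plan is to recognize the measure in question as an integral of $\IV{0}$ and expand it via Hadwiger's theorem. Define
\[
\phi_I(X) = \int_{\hoaff{n}} \IV{0}(gX \cap I) \dee g.
\]
By Corollary~\ref{cor:intersection} each $gX \cap I$ lies in $\cvx{n}$, and by Examples~\ref{egs:ivs}(i) the integrand equals $1$ when $gX \cap I \neq \emptyset$ and $0$ otherwise; so $\phi_I(X)$ is precisely the measure of the set~\eqref{eq:kine-set}. The strategy is then to show $\phi_I \in \Val{n}$ and to pin down its Hadwiger expansion.

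For the valuation property: if $X, Y, X \cup Y, X \cap Y \in \cvx{n}$, then each of the four sets $gX \cap I$, $gY \cap I$, $g(X \cup Y) \cap I$, $g(X \cap Y) \cap I$ is $\ell_1$-convex by Corollary~\ref{cor:intersection}, and they satisfy $(gX \cap I) \cup (gY \cap I) = g(X \cup Y) \cap I$ and $(gX \cap I) \cap (gY \cap I) = g(X \cap Y) \cap I$, so the valuation identity for $\IV{0}$ applies pointwise and survives integration. Invariance of $\phi_I$ under $\hoaff{n}$ comes from invariance of the constructed Haar measure (built into the product construction given just before Proposition~\ref{propn:products}), and monotonicity together with translation-invariance yields continuity via Corollary~\ref{cor:mono-cts}. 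Applying Theorem~\ref{thm:Had} then writes $\phi_I(X) = \sum_{i=0}^n a_i(I) \IV{i}(X)$ for some coefficients $a_i(I) \in \reals$.

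To compute $a_i(I)$, I would evaluate $\phi_I$ on the dilated cubes $X = t\ucube{n}$, $t > 0$. Parametrizing $g \in \hoaff{n}$ as $g(x) = h(x) + q$ with $h \in \ho{n}$ and $q \in \reals^n$ and integrating $q$ out gives
\[
\phi_I(X) = \frac{1}{|\ho{n}|}\sum_{h \in \ho{n}} \Vol(I + hX),
\]
using that $-\mathrm{id} \in \ho{n}$ to reindex away a sign. Since $h\ucube{n} = \ucube{n}$ for every $h \in \ho{n}$, this collapses to $\phi_I(t\ucube{n}) = \Vol(I + t\ucube{n})$. The $\ell_1$ Steiner formula of Section~\ref{sec:Ste} expands the right-hand side as $\sum_{i=0}^n \IV{i}(I)\, t^{n-i}$, while Examples~\ref{egs:ivs}(\ref{eg:ivs-cube}) and the $i$-homogeneity of $\IV{i}$ give $\phi_I(t\ucube{n}) = \sum_{i=0}^n a_i(I)\binom{n}{i}t^i$. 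Matching coefficients of $t^i$ yields $a_i(I) = \binom{n}{i}^{-1}\IV{n-i}(I)$, which is the claimed formula after the reindexing $j = n - i$.

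I expect the main obstacle to be the coefficient identification rather than any single piece of Hadwiger machinery: one must set up the parametrization of $\hoaff{n}$ carefully, see that integrating out $q$ produces a $\Vol$ term, and exploit the $\ho{n}$-invariance of $\ucube{n}$ precisely so that the $\ell_1$ Steiner formula can be applied. Everything else is routine once $\phi_I$ is shown to lie in $\Val{n}$, which in turn hinges on the by-now-standard observation that intersections with intervals preserve $\ell_1$-convexity.
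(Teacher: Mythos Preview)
Your proof is correct and follows essentially the same route as the paper: show that $\phi_I$ is a monotone invariant valuation (hence continuous by Corollary~\ref{cor:mono-cts}), apply the $\ell_1$ Hadwiger theorem, and identify the coefficients by evaluating on dilated cubes $t\ucube{n}$ and matching powers of $t$. The only cosmetic difference is that you expand $\Vol(I + t\ucube{n})$ by invoking the Steiner formula of Section~\ref{sec:Ste}, whereas the paper computes it directly from the product structure of $I$ using Example~\ref{eg:intvl-esp}.
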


\begin{proof}
Fix $I$.  Write $\phi(X)$ for the measure of the set~\eqref{eq:kine-set}:
then $\phi(X) = \int_{\hoaff{n}} \chi(gX \cap I) \dee g$.  This $\phi$ is a
monotone invariant valuation on $\cvx{n}$, and is therefore continuous by
Corollary~\ref{cor:mono-cts}.  So by the $\ell_1$ Hadwiger theorem,
\begin{equation}        \label{eq:kin-bival}
\phi
=
\sum_{i = 0}^n c_i \IV{i}
\end{equation}
for some real numbers $c_i$.  We compute $c_i$ by evaluating $\phi(\lambda
\ucube{n})$ for $\lambda > 0$.  By construction of the invariant measure on
$\hoaff{n}$,
\[
\phi(\lambda\ucube{n})
=
\frac{1}{\card{\ho{n}}}
\sum_{h \in \ho{n}}
\int_{\reals^n} 
\chi\bigl(
(h(\lambda\ucube{n}) + q) \cap I 
\bigr)
\dee q.
\]
But $\lambda\ucube{n}$ is $\ho{n}$-invariant, so $\phi(\lambda \ucube{n})$ is
the Lebesgue measure of the set
\begin{equation}        \label{eq:int-cuboid}
\{ q \in \reals^n 
\such 
(\lambda \ucube{n} + q) \cap I \neq \emptyset \}.
\end{equation}
If $I = \emptyset$ then the theorem holds trivially; suppose that $I \neq
\emptyset$.  Write $I = \prod_{r = 1}^n I_r$, and write $u_r$ for the length
of the interval $I_r$.  Then the set~\eqref{eq:int-cuboid} is a product of
intervals of lengths $\lambda + u_r$.  Hence
\begin{align*}
\phi(\lambda \ucube{n}) &
=
\prod_{r = 1}^n (\lambda + u_r) 
=
\lambda^n \prod_{r = 1}^n (1 + \lambda^{-1} u_r) 
=
\lambda^n \sum_{j = 0}^n \IV{j}(\lambda^{-1} I)
=
\sum_{j = 0}^n \IV{j}(I) \lambda^{n - j},
\end{align*}
using nonemptiness of $I$ and Example~\ref{eg:intvl-esp}.  On the other hand,
we may compute $\phi(\lambda \ucube{n})$ using~\eqref{eq:kin-bival}, and
comparing coefficients gives $c_i = \binom{n}{i}^{-1}\IV{n - i}(I)$.
\done
\end{proof}

Higher kinematic formulas for $\ell_1^n$ can be deduced from
Theorems~\ref{thm:Cro} and~\ref{thm:prin-kin} by an argument formally
identical to that in Section~10.3 of~\cite{KlRo}:

\begin{thm}
Let $0 \leq k \leq n$, let $X \in \cvx{n}$, and let $I$ be a compact interval
in $\reals^n$.  Then
\[
\int_{\hoaff{n}} \IV{k}(gX \cap I) \dee g
=
\sum_{i + j = n + k}
\binom{n}{i}^{-1}
\binom{j}{k} 
\IV{i}(X)
\IV{j}(I)
\]
where $0 \leq i \leq n$ and $0 \leq j \leq n$ in the summation.
\done
\end{thm}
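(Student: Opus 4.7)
The plan is to derive this formula from the Crofton formula (Theorem~\ref{thm:Cro}) and the principal kinematic formula (Theorem~\ref{thm:prin-kin}) by a Fubini exchange, mirroring Section~10.3 of~\cite{KlRo}. Setting $j = 0$ in Theorem~\ref{thm:Cro} gives the inversion
\[
\IV{k}(Y) = \int_{\Graff{n}{n - k}} \chi(Y \cap A) \dee A
\]
for every $Y \in \cvx{n}$. Applying this pointwise to $Y = gX \cap I$ and interchanging the order of integration (justified by Tonelli, as the integrand is a characteristic function) reduces the left-hand side of the statement to
\[
\int_{\Graff{n}{n - k}} \int_{\hoaff{n}} \chi\bigl( gX \cap (I \cap A) \bigr) \dee g \dee A.
\]

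For each fixed $A$, the slice $I \cap A$ is a compact interval in $\reals^n$: writing $I = \prod_r I_r$ and $A = \{ x \such x_r = q_r \text{ for } r \notin S \}$ for some $S \sub \{1, \ldots, n\}$ with $|S| = n - k$, one has $I \cap A = \prod_r J_r$ with $J_r = I_r$ for $r \in S$ and $J_r = \{q_r\} \cap I_r$ otherwise. Moreover this slice lies in an $(n - k)$-dimensional coordinate affine subspace, so $\IV{j}(I \cap A) = 0$ for $j > n - k$. Applying Theorem~\ref{thm:prin-kin} to $X$ and the interval $I \cap A$ therefore gives
\[
\int_{\hoaff{n}} \chi\bigl( gX \cap (I \cap A) \bigr) \dee g
=
\sum_{\substack{i + j = n \\ j \leq n - k}} \binom{n}{i}^{-1} \IV{i}(X) \IV{j}(I \cap A).
\]

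Substituting this back, swapping the sum past the integral over $\Graff{n}{n - k}$, and applying Crofton a second time with $k' = n - k$ to each inner integral yields $\int_{\Graff{n}{n - k}} \IV{j}(I \cap A) \dee A = \binom{j + k}{j} \IV{j + k}(I)$ for $j \leq n - k$. Reindexing via $j' = j + k$, so that $i + j' = n + k$ and $\binom{j'}{j' - k} = \binom{j'}{k}$, produces the claimed identity. There is no substantive obstacle: the entire argument is bookkeeping, once one verifies that $I \cap A$ is a compact interval so that Theorem~\ref{thm:prin-kin} applies, and tracks the binomial reindexing. No new geometric input beyond the two preceding kinematic theorems is required.
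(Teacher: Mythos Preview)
Your argument is correct and is precisely the approach the paper indicates: it says the result follows from Theorems~\ref{thm:Cro} and~\ref{thm:prin-kin} by an argument formally identical to Section~10.3 of~\cite{KlRo}, which is exactly the Fubini exchange (Crofton to express $\IV{k}$, principal kinematic applied to the sliced interval $I\cap A$, then Crofton again) that you have written out. The only checks needed---$gX\cap I\in\cvx{n}$ by Corollary~\ref{cor:intersection}, $I\cap A$ a compact interval, and the binomial reindexing---you have handled correctly.
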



\end{document}